\newtheorem{theorem}{Theorem}[section]
\newtheorem{lemma}[theorem]{Lemma}
\newtheorem{proposition}[theorem]{Proposition}
\newtheorem{corollary}[theorem]{Corollary}
\theoremstyle{definition}
\newtheorem{definition}[theorem]{Definition}
\newtheorem{example}[theorem]{Example}
\newtheorem{remark}[theorem]{Remark}
\theoremstyle{approach}
\numberwithin{equation}{section}
\begin{document}
\setcounter{page}{1}
\title[amenability modulo an ideal  of FR$\acute{E}$CHET algebras]{amenability modulo an ideal  of FR$\acute{E}$CHET algebras}
\author[S. Rahnama and A. Rejali]{S. Rahnama and A. Rejali}


\subjclass[2010]{43A99,46H05} \keywords{Amenability, Amenability modulo an ideal , Banach algebra,
Fr$\acute{e}$chet algebra.}

\begin{abstract}
Amenability modulo an ideal of a Banach algebra 
have been defined and studied. 
In this paper we introduce the concept of amenability modulo an
ideal of a  Fr$\acute{e}$chet algebra and investigate some known 
results about amenability modulo an ideal of a Fr$\acute{e}$chet algebra.
Also we show that a  Fr$\acute{e}$chet algebra $(\mathcal A,p_n)_{n\in\mathbb{N}}$ 
is amenable modulo an ideal  if and only if $\mathcal A$ is isomorphic
to a reduced inverse limit of amenable modulo an ideal of Banach algebras. 
\end{abstract}

\maketitle \setcounter{section}{-1}

\section{\bf Introduction}
Some parts of theory of Banach algebras, have been introduced and 
studied for Fr$\acute{e}$chet algebras. For example 
the notion of amenability of a Fr$\acute{e}$chet algebra and their 
applications to harmonic analysis  was
introduced by Helemskii in \cite{Hel4} and \cite{Hel5}, and was
 covered and studied by  Pirkovskii \cite{Pir}. Also in \cite{Law},
approximate amenability and approximate contractibility of 
Fr$\acute{e}$chet algebras was introduced and investigated.
Furthermore, in \cite{ARR1} we and Abtahi studied weak amenability of
Fr$\acute{e}$chet algebras and generalized some  results
related to weak amenability of Banach algebras for
Fr$\acute{e}$chet algebras.
In \cite{Rahimi Amini} Amini and Rahimi introduced the notion of amenability modulo an
 ideal of Banach algebras. They showed that  amenability of the semigroup algebra $\ell^{1}(S)$
modulo ideals by certain classes of group congruence of $S$ is 
equivalent to the amenability of $S$.
Then Rahimi and Tahmasebi \cite{Rahimi1} continued this vertification
 and studied basic properties of amenability modulo an ideal such as
 virtual diagonal modulo an ideal, approximate diagonal 
modulo an ideal and contractibility modulo an ideal for Banach algebras.

In the present work, we continue our study on amenability  of
Fr$\acute{e}$chet algebras.
We generalize
some basic definitions and   results about the concept  of amenability modulo an ideal
in Banach algebra case for Fr$\acute{e}$chet algebras. According to the definition of
 amenability  modulo an ideal  of Banach
algebras,  we introduce the concept of 
amenability modulo an ideal  of
Fr$\acute{e}$chet algebras. Then we verified some concept in the Banach algebra case, for Fr$\acute{e}$chet
algebras.
The remainder of the paper is organized as follows.
Section $1$ presents some preliminaries and basic results and definitions about locally convex spaces and 
Fr$\acute{e}$chet algebras. In section $2$ we define the notion of amenability modulo
an ideal for a Fr$\acute{e}$chet algebra.  As the main result of this section we investigate the 
relation of the amenability modulo an ideal of a Fr$\acute{e}$chet algebra
with amenability modulo an ideal of Banach algebras which form this Fr$\acute{e}$chet algebra.
More precisely  we show that if 
 $\mathcal A=\underleftarrow{\lim}\mathcal A_n$
be an  Arens-Michael decomposition of $\mathcal A$ and $I=\underleftarrow{\lim}\overline{I_n}$
 be an Arens-Michael decomposition of $I$, 
 then $\mathcal A$ is amenable modulo $I$
 if and only if for each $n\in\mathbb{N}$, $\mathcal A_n$
 is amenable modulo $\overline{I_n}$. Also we discuss about the relation between amenability 
 modulo an ideal of a  Fr$\acute{e}$chet algebra and the amenability of the quotient algebra 
of this  Fr$\acute{e}$chet algebra. Moreover we provide some examples of amenable modulo an 
ideal of  Fr$\acute{e}$chet algebras which are not amenable.
Section $3$  describes the notions of locally bounded approximate identity
  modulo an ideal for a   Fr$\acute{e}$chet
algebra. In this section we prove that all amenable modulo an ideal 
  Fr$\acute{e}$chet algebras have locally bounded approximate identities modulo an ideal.

\section{\bf Preliminaries}

In this section, we first exhibit basic  definitions and results 
related to locally convex spaces and also Fr$\acute{e}$chet
algebras, which will be used throughout the paper. We refer the reader to 
\cite{Gold}, \cite{Hel2}, \cite{Hel3}, \cite{MA} and \cite{ME} for these results.

By a  locally convex space $E$ we mean a topological
vector space in which the origin has a local base of absolutely
convex absorbing sets. 
 We denote by $(E,p_{\alpha})$, a
locally convex space $E$ with a fundamental system of seminorms
$(p_{\alpha})_{\alpha}$.

If $(E,p_{\alpha})_{\alpha\in A}$ and $(F,q_{\beta})_{\beta\in
B}$ be locally convex spaces, by applying  \cite[Proposition 22.6]{ME}
 a linear mapping $T: E\to F$ is continuous if and
only if for each $\beta\in B$ there exist an $\alpha\in A$ and
$C>0$, such that
$$q_{\beta}(T(x))\leq Cp_{\alpha}(x),$$ for all $x\in E$.
Also by  \cite[page 24]{Hel2}, for locally convex spaces  $(E,p_{\mu})$, $(F,q_{\lambda})$ and
$(G,r_{\nu})$, the  bilinear map $\theta:E\times F\to
G$  is jointly continuous if and
only if for any ${\nu_0}$ there exist ${\mu_0}$ and ${\lambda_0}$
such that the bilinear map
$$
\theta:(E,p_{\mu_0})\times (F,q_{\lambda_0})\longrightarrow
(G,r_{\nu_0})
$$
is jointly continuous.  Separate continuity of a bilinear map also defined in \cite{Shi}.
In fact the bilinear map
$f:E\times F\rightarrow G$ is said to be separately continuous
if all partial maps $f_x:F\rightarrow G$ and $f_y:E\rightarrow G$
defined by $y\mapsto f(x,y)$ and $x\mapsto f(x,y)$, respectively,
are continuous for each $x\in E$ and $y\in F$. 
  By applying \cite[chapter.III.5.1]{Shi}, we have the
 fact that separate continuity implies joint continuity for 
  Fr$\acute{e}$chet spaces and in particular, Banach 
spaces.

By a  topological algebra we mean a linear 
associative algebra   ${\mathcal A}$,
whose underlying vector space is a topological 
vector space such that  the multiplication
$${\mathcal A}\times{\mathcal A}\longrightarrow {\mathcal A}, \;\;\;\; (a,b)\mapsto ab$$
is a separately continuous mapping; see \cite{Pir}. An outstanding 
particular class of topological algebras is the class of Fr$\acute{e}$chet algebras. A
Fr$\acute{e}$chet algebra, denoted by $(\mathcal A, p_n)$, is a
complete topological algebra, whose topology is given by the
countable family of increasing submultiplicative seminorms; see
\cite{Gold} and \cite{Hel3}. Also every closed subalgebra of a
Fr$\acute{e}$chet algebra is clearly a Fr$\acute{e}$chet algebra.

For  a  Fr$\acute{e}$chet algebra $(\mathcal A,p_n)$,  a
locally convex $\mathcal A$-bimodule is a locally convex
topological vector space $X$  with an
$\mathcal A$-bimodule structure  such that the corresponding mappings are
separately continuous.
 Let $(\mathcal A,p_n)$ be a Fr$\acute{e}$chet algebra and $X$ be a 
locally convex  $\mathcal A$-bimodule.  Following \cite{Hel3}, a continuous  
derivation of $\mathcal A$ into $X$ is a continuous mapping 
$D$ from $\mathcal A$ into $X$ such that 
$$D(ab)=a.D(b)+D(a).b,$$
for all $a,b\in \mathcal A$. Furthermore for each 
$x\in X$ the mapping $\delta_x:\mathcal A\to X$ defined by
$$\delta_x(a)=a.x-x.a\;\;\;\;\;(a\in\mathcal A),$$
is a continuous derivation and is called the inner derivation associated with 
$x$.

We recall definition of an inverse limit from \cite{F}. Let 
 $(E_{\alpha})_{{\alpha}\in \Lambda}$ be a family of algebras, 
where  $\Lambda$ is  a directed  set. Also suppose that $f_{\alpha\beta}$ is a
 family 
 of homomorphisms defined from  $E_{\beta}$ into $E_{\alpha}$
for any ${\alpha,\beta\in \Lambda}$, with $\alpha\leq \beta$. A family 
$\{(E_{\alpha}, f_{\alpha\beta})\}$ is called a  projective system of algebras, if it 
has the above relation and in addition  
 satisfies  the following condition 
$$f_{\alpha\gamma}=f_{\alpha\beta}\circ f_{\beta\gamma},\;\;\; (\alpha,\beta,\gamma\in\Lambda,\alpha\leq\beta\leq\gamma). $$
Now consider the cartesian product algebra  $F=\prod_{\alpha\in\Lambda}E_{\alpha}$, and a  subset of $F$,
$$E=\{x=(x_{\alpha})\in F:x_{\alpha}=f_{\alpha\beta}(x_{\beta}),\;\; \alpha\leq \beta\}. $$
Then $E$ is the projective (or inverse) limit of the  projective system
 $\{(E_{\alpha}, f_{\alpha\beta})\}$ and we denote it
  by $E= \underleftarrow{\lim}\{(E_{\alpha}, f_{\alpha\beta})\}$
or simply $E=\underleftarrow{\lim}E_{\alpha}$.

Now let $(\mathcal A, p_{\lambda})$ be a locally convex algebra.
Obviously for each ${\lambda\in \Lambda}$, $N_{\lambda}= \ker p_{\lambda}$  
 is an ideal  in $\mathcal A$ and
 $\frac{\mathcal A}{N_{\lambda}}$ is a   normed algebra. 
Suppose that $\varphi_{\lambda}:\mathcal A\to\mathcal A_{\lambda},\varphi_{\lambda}(x)=x_{\lambda}=x+{N_{\lambda}} $
be the corresponding quotient map. It  is clear that $\varphi_{\lambda}$ is  a continuous surjective 
homomorphism. Now if  $\lambda, \gamma\in\Lambda$ with $\lambda\leq \gamma$, one 
has $N_{\lambda}\subseteq N_{\gamma}$. So that the linking maps
$$\varphi_{\lambda\gamma}:\frac{\mathcal A}{N_{\gamma}}\to\frac{\mathcal A}{N_{\lambda}},\;\;\;\varphi_{\lambda\gamma}(x+N_{\gamma})=x+{N_{\lambda}}, $$
are well defined continuous surjective homomorphisms such that
$\varphi_{\lambda\gamma}\circ \varphi_{\gamma}=\varphi_{\lambda}$. Hence $\varphi_{\lambda\gamma}$'s
have unique extentions to continuous homomorphisms between
the Banach algebras $\mathcal A_{\gamma}$ and $\mathcal A_{\lambda}$, we use the symbol 
$\varphi_{\lambda\gamma}$ for the extentions too, which  
$\mathcal A_{\gamma}$ is the compeletion of $\frac{\mathcal A}{N_{\gamma}}$.
The families 
$(\frac{\mathcal A}{N_{\lambda}},\varphi_{\lambda\gamma} )$ [respectively, $(\mathcal A_{\lambda},\varphi_{\lambda\gamma} )$],
form inverse system of 
normed,[ respectively, Banach] algebras.  We denote the  corresponding inverse limits 
by $\underleftarrow{\lim}\frac{\mathcal A}{N_{\lambda}}$ and $\underleftarrow{\lim}\mathcal A_{\lambda}$.
Moreover if the initial algebra $\mathcal A$ is complete, one has 
$$\mathcal A=\underleftarrow{\lim}\frac{\mathcal A}{N_{\lambda}}  =\underleftarrow{\lim}\mathcal A_{\lambda},$$
up to topological isomorphisms.
Now let $\mathcal A$ be a Fr$\acute{e}$chet algebra 
with fundamental system of increasing
 submultiplicative seminorms $(p_n)_{n\in\mathbb{N}}$.
For each $n\in\mathbb{N}$ let $\varphi_n:\mathcal A\to \frac{\mathcal A}{\ker p_n}$
be the quotient map. Then $\frac{\mathcal A}{\ker p_n}$ is naturally a normed
algebra, normed by setting $\|\varphi_n(a)\|_n=p_n(a)$ for each $a\in\mathcal A$
and the  compeletion  $(\mathcal A_n, \|.\|_n)$ 
is a Banach algebra. We call the map $\varphi_n$  from $\mathcal A$ into
$\mathcal A_n$, the canonical map. It is important to note that $\varphi_n(\mathcal A)$ is a dense 
subalgebra of $\mathcal A_n$ and  in general $\mathcal A_n\neq\varphi_n(\mathcal A)$.

The above is the Arense-Michael decomposition of $\mathcal A$, 
which expresses Fr$\acute{e}$chet algebra as an reduced inverse limit of Banach algebras. 
Now choose an Arens-Michael decomposition  $\mathcal A=\underleftarrow{\lim}\mathcal A_n$
and let $I$ be a closed ideal of $\mathcal A$.
Then it is easy to see that $I=\underleftarrow{\lim}\overline{I_n}$ is an 
Arens-Michael decomposition of $I$, where $\varphi_{n}:I\to I_n$
is the canonical map.(see \cite{Pir}).

According  to \cite{Pir} if $\mathcal A$ is a locally convex algebra
and $X$ is a left locally convex  $\mathcal A$-module, then  a continuous
seminorm $q$ on $X$  is m-compatible if there exists a continuous 
submultiplicative seminorm $p$ on  $\mathcal A$ such that 
$$q(a.x)\leq p(a)q(x),\;\;\;(a\in \mathcal A, x\in X).$$
Also by \cite[3.4]{Pir1} if $\mathcal A$ is a Fr$\acute{e}$chet algebra
and $X$ is a complete left  $\mathcal A$-module
with a jointly continuous left module action, then the topology
on $X$ can be determined by a directed family of m-compatible
seminorms.

\section{\bf Amenability  modulo an ideal of a  Fr$\acute{e}$chet algebra }
Let $\mathcal A$ be a Banach algebra and $I$ be a closed ideal of $\mathcal A$. According to
\cite{Rahimi  Amini}, $\mathcal A$ is amenable modulo $I$, if for every Banach 
$\mathcal A$-bimodule $E$ such that $I.E=E.I=0$ and every derivation $D$ from $\mathcal A$
into $E^*$, there exists $\varphi\in E^*$ such that
$$D(a)=a.\varphi-\varphi.a,  \;\;\;\;(a\in\mathcal A\setminus I).$$
We commence with the  definition of amenability  modulo an ideal
for a Fr$\acute{e}$chet algebra. We extend some results of \cite{Rahimi1}, for Fr$\acute{e}$chet algebras.
Recall that for the algebra $\mathcal A$,
$$\mathcal A.\mathcal A=\{a.b:\;\;\; a,b\in\mathcal A\}. $$
Also $\mathcal A^2$ is  the linear span of $\mathcal A.\mathcal A$.
\begin{definition}\label{D1}\rm
Let $(\mathcal A, p_{n})$ be a Fr$\acute{e}$chet algebra and $I$ be a closed ideal
of  $\mathcal A$. We
call $\mathcal A$ amenable modulo $I$,  if  for every Banach  
$\mathcal A$-bimodule $E$ such that $E.I=I.E=0$ each
continuous derivation from $\mathcal A$ into $E^*$ is
 inner on $\mathcal A\setminus I$.
\end{definition}
Note that the concept of amenability modulo an ideal
for a Fr$\acute{e}$chet algebra $\mathcal A$ coincides with
the concept of amenability modulo an ideal, in the case where 
$\mathcal A$ is a Banach algebra. Also in view of \cite[Theorem 9.6]{Pir}
each amenable Fr$\acute{e}$chet algebra is   amenable modulo $I$ for each closed 
ideal $I$. But at the end of this section we show that in general the converese of it,
is not true. An easy computation shows that   
  if a Fr$\acute{e}$chet algebra $\mathcal A$ is amenable modulo  $I=\{0\}$,
then $\mathcal A$ is amenable. Henceforth amenability modulo an ideal 
for a Fr$\acute{e}$chet algebra is a generalization of the concept of the
amenability  for a Fr$\acute{e}$chet algebra.

As for first result we extend  \cite[Theorem 8]{Rahimi1} for 
Fr$\acute{e}$chet algebras.  
The proof is similar to the Banach algebra case.
\begin{proposition}\label{P1}
Let $\mathcal A$ be a Fr$\acute{e}$chet algebra and
$I$ be a closed ideal of $\mathcal A$ and $\mathcal A$ be
 amenable modulo $I$. Suppose that
$\mathcal{B}$ is a  Fr$\acute{e}$chet algebra and
$J$ is a closed ideal of $\mathcal B$. Let $\varphi: \mathcal A\to \mathcal B$
be a continuous homomorphism with dense range
such that $\varphi(I)\subseteq J$. Then $\mathcal B$ is amenable modulo $J$.
\end{proposition}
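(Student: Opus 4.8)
The plan is to transport the module $E$ back to $\mathcal A$ along $\varphi$, solve the cohomological problem there using the amenability of $\mathcal A$ modulo $I$, and then push the resulting implementing functional forward to $\mathcal B$ by a density argument. First I would fix an arbitrary Banach $\mathcal B$-bimodule $E$ with $E\cdot J=J\cdot E=0$ together with a continuous derivation $D\colon\mathcal B\to E^*$; the goal is to produce $\psi\in E^*$ with $D(b)=b\cdot\psi-\psi\cdot b$ for every $b\in\mathcal B\setminus J$. Using that $\varphi$ is a continuous homomorphism, I would turn $E$ into a Banach $\mathcal A$-bimodule via the pulled-back actions $a\cdot x:=\varphi(a)\cdot x$ and $x\cdot a:=x\cdot\varphi(a)$; continuity of these actions is routine, following from the continuity of $\varphi$ and the continuity of the $\mathcal B$-module operations on the Banach space $E$. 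The crucial annihilation property holds precisely because $\varphi(I)\subseteq J$: indeed $I\cdot E=\varphi(I)\cdot E\subseteq J\cdot E=0$ and likewise $E\cdot I=0$. Dualizing, $E^*$ carries the dual $\mathcal A$-bimodule structure, which is exactly the pullback along $\varphi$ of the dual $\mathcal B$-bimodule structure on $E^*$.

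Next I would observe that $D\circ\varphi\colon\mathcal A\to E^*$ is a continuous derivation for this transported structure: since $\varphi$ is multiplicative and $D$ is a derivation over $\mathcal B$, one checks directly that $(D\circ\varphi)(ab)=a\cdot(D\circ\varphi)(b)+(D\circ\varphi)(a)\cdot b$, and continuity is clear as a composition. Because $\mathcal A$ is amenable modulo $I$, there exists $\psi\in E^*$ with $(D\circ\varphi)(a)=a\cdot\psi-\psi\cdot a$ for all $a\in\mathcal A\setminus I$; unravelling the definitions, this reads $D(\varphi(a))=\varphi(a)\cdot\psi-\psi\cdot\varphi(a)$ for every $a\in\mathcal A\setminus I$. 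I would then set $\ti D:=D-\delta_\psi$, a continuous derivation of $\mathcal B$ into $E^*$ (recall that the inner derivation $\delta_\psi$ is continuous), which by the previous identity vanishes on $\varphi(\mathcal A\setminus I)$.

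Finally I would upgrade this to vanishing on all of $\mathcal B\setminus J$ by a density argument. Since $J$ is closed, $\mathcal B\setminus J$ is open, and as $\varphi(\mathcal A)$ is dense in $\mathcal B$ the set $\varphi(\mathcal A)\cap(\mathcal B\setminus J)$ is dense in $\mathcal B\setminus J$; here the hypothesis $\varphi(I)\subseteq J$ enters again, because any $a$ with $\varphi(a)\notin J$ automatically satisfies $a\notin I$. Hence every $b\in\mathcal B\setminus J$ is a limit of a sequence $\varphi(a_k)$ with $a_k\in\mathcal A\setminus I$ (Fr$\acute{e}$chet spaces being metrizable), and continuity of $\ti D$ gives $\ti D(b)=\lim_k\ti D(\varphi(a_k))=0$, that is, $D(b)=b\cdot\psi-\psi\cdot b$. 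This shows $D$ is inner on $\mathcal B\setminus J$, so $\mathcal B$ is amenable modulo $J$.

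The routine points are the verifications that the transported actions are continuous and that $D\circ\varphi$ is a derivation; the real content, and the step I expect to need the most care, is the last one, where the closedness of $J$ and the inclusion $\varphi(I)\subseteq J$ must be combined with the dense range of $\varphi$ to guarantee that points of $\mathcal B\setminus J$ are approximated by images of points of $\mathcal A\setminus I$, rather than by images of arbitrary elements of $\mathcal A$. This is exactly what keeps the ``modulo the ideal'' bookkeeping consistent on both sides and is the place where all three hypotheses on $\varphi$ are simultaneously used.
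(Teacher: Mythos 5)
Your proof is correct and follows essentially the same route as the paper's: pull the bimodule back along $\varphi$, apply amenability of $\mathcal A$ modulo $I$ to the derivation $D\circ\varphi$, and then extend innerness to all of $\mathcal B\setminus J$ by a density argument. If anything, your final step is more careful than the paper's, which merely asserts ``we may assume $(a_{\alpha})_{\alpha}\subseteq\mathcal A\setminus I$,'' whereas you justify this explicitly by combining the openness of $\mathcal B\setminus J$ with the inclusion $\varphi(I)\subseteq J$.
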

\begin{proof}
 Suppose that $E$ is a Banach $\mathcal B$-bimodule 
such that $J.E=E.J=0$ and $D:\mathcal B\to E^*$ is a continuous derivation.
Then $E$ becomes a Banach $\mathcal A$-bimodule
with the module actions defined
by 
$$a.x=\varphi(a)x \;\;\;\;\;  and\;\;\;\;\;x.a=x\varphi(a)\;\;\;\;(a\in \mathcal A,x\in E).$$
Obviously  $I.E=E.I=0$ and also $D\circ \varphi:\mathcal A\to E^*$ 
is a continuous derivation. On the other hand $\mathcal A$ is amenable modulo $I$,
 so there exists $\eta\in E^*$ such that $(D\circ \varphi) (a)= a.\eta-\eta.a$ 
on $\mathcal A\setminus I$. Now if $b\in\mathcal B\setminus J$, then
there exists a net $(a_{\alpha})_{\alpha}\subseteq\mathcal A$
such that $b=\lim_{\alpha}\varphi(a_{\alpha})$.
Since $\varphi(I)\subseteq J$, we may assume that
$(a_{\alpha})_{\alpha}\subseteq\mathcal A\setminus I$. Now  we have 
\begin{eqnarray*}
D(b)&=&\lim_{\alpha}(D\circ\varphi)(a_{\alpha})\\
&=&\lim_{\alpha}(a_{\alpha}.\eta-\eta.a_{\alpha})\\
&=&\lim_{\alpha}(\varphi(a_{\alpha})\eta-\eta\varphi(a_{\alpha}))\\
&=& b\eta-\eta b.
\end{eqnarray*}
Thus $\mathcal B$ is amenable modulo $J$.
\end{proof}
In \cite [Theorem 9.5]{Pir}, Pirkovskii asserts that a Fr$\acute{e}$chet algebra 
 $\mathcal A$ is amenable if and only if   $\mathcal A$ is isomorphic
to a reduced inverse limit of amenable   Banach algebras.
In the following theorem, we extend  this result as the main result of this section,
 for amenability modulo an ideal 
of Fr$\acute{e}$chet algebras.

During this section, for simplicity's sake we use the notation $I_n=\varphi_n(I)$, where
$\varphi_n:\mathcal A\to \mathcal A_n$ is the canonical map for each $n\in\mathbb{N}$.
\begin{theorem}\label{T1}
Let $(\mathcal A,p_n)$ be a Fr$\acute{e}$chet algebra and
$I$ be a closed ideal of $\mathcal A$. Then the following assertions are
equivalent;
\begin{enumerate}
\item[(i)] $\mathcal A$ is amenable modulo $I$.
 \item[(ii)]
For each Arens-Michael decomposition of
$\mathcal A=\underleftarrow{\lim}\mathcal A_n$
all $\mathcal A_n$'s are amenable Banach algebras modulo $\overline{I_n}$'s, 
where $I=\underleftarrow{\lim}\overline{I_n}$ is an 
Arens-Michael decomposition of $I$. 
\end{enumerate}
\end{theorem}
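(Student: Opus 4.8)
The plan is to prove the equivalence by translating between continuous derivations on the Fréchet algebra $\mathcal A$ and continuous derivations on each Banach algebra $\mathcal A_n$, using the Arens--Michael machinery and the m-compatible seminorm result quoted at the end of Section~1.

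\medskip

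\noindent\textbf{Proof sketch.}

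First I would prove the implication (ii)$\Rightarrow$(i). Assume every $\mathcal A_n$ is amenable modulo $\overline{I_n}$. Let $E$ be a Banach $\mathcal A$-bimodule with $E.I=I.E=0$ and let $D:\mathcal A\to E^*$ be a continuous derivation. The crucial first step is to factor the module action through a single $\mathcal A_n$: since $D$ is continuous and $E^*$ is a Banach space, continuity (via \cite[Proposition 22.6]{ME}) gives an index $n$ and a constant $C$ with $\|D(a)\|\le C\,p_n(a)$, and by enlarging $n$ the m-compatible seminorm result from \cite{Pir1} lets us assume the bimodule action of $\mathcal A$ on $E$ is controlled by $p_n$, so that $\ker\varphi_n=\ker p_n$ annihilates $E$ from both sides. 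Consequently both $D$ and the module action descend to $\mathcal A/\ker p_n$ and extend by continuity to the completion $\mathcal A_n$, turning $E$ into a Banach $\mathcal A_n$-bimodule and $D$ into a continuous derivation $\widetilde D:\mathcal A_n\to E^*$. One checks $\overline{I_n}.E=E.\overline{I_n}=0$ using $I.E=E.I=0$ together with density of $\varphi_n(I)$ in $\overline{I_n}$ and separate continuity. Amenability of $\mathcal A_n$ modulo $\overline{I_n}$ then yields $\eta\in E^*$ with $\widetilde D(b)=b.\eta-\eta.b$ for all $b\in\mathcal A_n\setminus\overline{I_n}$; pulling back along $\varphi_n$ and noting $\varphi_n(\mathcal A\setminus I)\subseteq \mathcal A_n\setminus\overline{I_n}$ shows $D$ is inner on $\mathcal A\setminus I$.

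For the converse (i)$\Rightarrow$(ii), fix $n$ and let $E$ be a Banach $\mathcal A_n$-bimodule with $\overline{I_n}.E=E.\overline{I_n}=0$, and let $D:\mathcal A_n\to E^*$ be a continuous derivation. Pull everything back along the canonical map $\varphi_n:\mathcal A\to\mathcal A_n$: then $E$ is a Banach $\mathcal A$-bimodule with actions $a.x=\varphi_n(a)x$, $x.a=x\varphi_n(a)$, for which $I.E=E.I=0$ (because $\varphi_n(I)\subseteq\overline{I_n}$ and $\overline{I_n}$ annihilates $E$), and $D\circ\varphi_n:\mathcal A\to E^*$ is a continuous derivation. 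Amenability of $\mathcal A$ modulo $I$ supplies $\eta\in E^*$ with $(D\circ\varphi_n)(a)=a.\eta-\eta.a=\varphi_n(a)\eta-\eta\varphi_n(a)$ on $\mathcal A\setminus I$. It remains to transfer innerness from the dense range $\varphi_n(\mathcal A\setminus I)$ to all of $\mathcal A_n\setminus\overline{I_n}$ by a density-and-continuity argument essentially identical to the one in the proof of Proposition~\ref{P1}: any $b\in\mathcal A_n\setminus\overline{I_n}$ is a limit of a net $\varphi_n(a_\alpha)$ which may be taken in $\varphi_n(\mathcal A\setminus I)$, and passing to the limit gives $D(b)=b.\eta-\eta.b$.

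\medskip

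I expect the main obstacle to be the first step of (ii)$\Rightarrow$(i): justifying that the Banach $\mathcal A$-bimodule action on $E^*$ genuinely factors through a single Banach algebra $\mathcal A_n$ so that the derivation extends. This requires combining the continuity characterisation \cite[Proposition 22.6]{ME} with the m-compatible seminorm theorem of \cite{Pir1} and checking that one can choose a single index $n$ dominating both the continuity of $D$ and the boundedness of the module action; the subtlety is that $E$ carries its own Banach-space topology while the $\mathcal A$-action is only separately continuous, so one must argue that the dual action on $E^*$ is controlled by some $p_n$ before descending to the quotient. A secondary technical point, recurring in both directions, is the careful handling of the set-difference condition ``inner on $\mathcal A\setminus I$'' versus ``inner on $\mathcal A_n\setminus\overline{I_n}$'': one must verify that the image and preimage of these complement sets behave correctly under $\varphi_n$, in particular that $\varphi_n^{-1}(\mathcal A_n\setminus\overline{I_n})\subseteq\mathcal A\setminus I$ and that density of $\varphi_n(\mathcal A)$ in $\mathcal A_n$ is compatible with approximating elements of $\mathcal A_n\setminus\overline{I_n}$ from within $\varphi_n(\mathcal A\setminus I)$.
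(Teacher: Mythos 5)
Your proposal follows essentially the same route as the paper: the (i)$\Rightarrow$(ii) direction is the dense-range transfer argument (the paper simply invokes its Proposition~\ref{P1} with $\varphi=\varphi_n$, which is exactly what you re-prove inline), and (ii)$\Rightarrow$(i) factors both the derivation and the module action through a single Banach level $\mathcal A_n$ using continuity of $D$ together with m-compatibility of the norm of $E$. Your (i)$\Rightarrow$(ii) direction is sound, including the density step showing that points of $\mathcal A_n\setminus\overline{I_n}$ can be approximated from $\varphi_n(\mathcal A\setminus I)$.

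The genuine gap is in the final step of (ii)$\Rightarrow$(i): the inclusion $\varphi_n(\mathcal A\setminus I)\subseteq\mathcal A_n\setminus\overline{I_n}$ that you invoke is false in general. It is equivalent to $\varphi_n^{-1}(\overline{I_n})\subseteq I$, whereas $I=\underleftarrow{\lim}\overline{I_n}$ only means that $a\in I$ precisely when $\varphi_m(a)\in\overline{I_m}$ for \emph{every} $m$; so $a\notin I$ guarantees $\varphi_m(a)\notin\overline{I_m}$ only for some $m$ depending on $a$ (and then for all larger $m$), not for the single index $n$ that was fixed earlier by the continuity of $D$ and of the module action. The simplest counterexample is $I=\{0\}$ with $\ker p_n\neq\{0\}$. (The inclusion you list at the end as the thing to verify, $\varphi_n^{-1}(\mathcal A_n\setminus\overline{I_n})\subseteq\mathcal A\setminus I$, is true --- it is just a restatement of $\varphi_n(I)\subseteq\overline{I_n}$ --- but it points the wrong way; what your argument actually uses is its false converse.) Consequently, for $a\in\mathcal A\setminus I$ with $\varphi_n(a)\in\overline{I_n}$ your proof gives nothing: innerness of $\widetilde D$ is known only off $\overline{I_n}$, and a derivation need not vanish on $\overline{I_n}$ (it automatically vanishes only on the span of products from $\overline{I_n}$). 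The repair, which is what the terse closing remark of the paper's own proof is gesturing at, is to show that both sides vanish for such $a$: since the $\mathcal A$-action on $E$ factors through $\mathcal A_n$, such an $a$ annihilates $E$ and hence $E^*$, so $a.\eta-\eta.a=0$; and choosing $m\geq n$ with $\varphi_m(a)\notin\overline{I_m}$, noting that $D=D_m\circ\varphi_m$ and that the $\mathcal A_m$-action on $E$ factors through the linking map $\varphi_{nm}:\mathcal A_m\to\mathcal A_n$, amenability of $\mathcal A_m$ modulo $\overline{I_m}$ gives $D(a)=\varphi_m(a).\psi-\psi.\varphi_m(a)=0$, because $\varphi_m(a)$ also acts as zero on $E$. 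Without an argument of this kind, (ii)$\Rightarrow$(i) is incomplete.
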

\begin{proof}
$(i)\Rightarrow (ii)$. Let $\mathcal A=\underleftarrow{\lim}\mathcal A_n$
be an  Arens-Michael decomposition of $\mathcal A$ and
 $\mathcal A$ be amenable  modulo ${I}$. Since $\varphi_n:\mathcal A\to \mathcal A_n$ is 
a continuous homomorphism with dense range, by Proposition \ref{P1},
 $\mathcal A_n$ is amenable  modulo $\overline{I_n}$
for each $n\in\mathbb{N}$.\\
$(ii)\Rightarrow (i)$. 
Let $\mathcal A_n$ be amenable  modulo $\overline{I_n}$
for each $n\in\mathbb{N}$ and let $E$ be  a Banach $\mathcal A$-bimodule
such that  $E.I=I.E=0$ and $D:\mathcal A\to  E^*$ be a continuous derivation. 
Since $(E^*, \| \cdot\|) $ is a Banach space, continuity of $D$ implies the 
existence of a constant $C>0$ and an $n_0\in\mathbb{N}$
such that $$\| D(a)\|\leq Cp_{n_0}(a),\;\;\;\;\;\;( a\in\mathcal A).$$
Since $\{p_n\}_{n}$ is a fundamental system of increasing 
seminorms, we have $\| D(a)\|\leq C  p_m(a)$,  for each $m\geq n_0$.
Thus $\ker p_m\subseteq \ker D$, for each $m\geq n_0$, so the function 
 $$D_m:\frac{\mathcal A}{\ker p_m}\to  E^* , \;\;\;\;D_m(a+\ker p_m)=D_m(a_m)=D(a),\;\;\;(a\in\mathcal A),$$
is well defined and is a continuous derivation on $\frac{\mathcal A}{\ker p_m}$,
for each $m\geq n_0$. The unique extention of $D_m$ to the Banach algebra 
$\mathcal A_m$ is a continuous derivation is denoted by $D_m$.
On the other hand since $(E, \|\cdot \|)$ is a Banach $\mathcal A$-bimodule,
the norm on $E$ is $m$-compatible, so there exists $n_1\in\mathbb{N}$
such that $\|a.x\|\leq p_{n_1}(a)\|x\|$, for all $ a\in \mathcal A$
 and $x\in E$. (see \cite[Page 7]{Pir}). Thus 
 $$\|a.x\|\leq p_{m}(a)\|x\|, \;\;\;\;\;\;   (a\in \mathcal A,\;x\in E,\;m\geq n_1).$$
Then $E$ is a Banach $\mathcal A_m$-bimodule in a natural way for 
each $m\geq n_1$.(see \cite[Page 7]{Pir}). In conclusion, since $E.I=I.E=0$
we have the following
$$E.I_m=I_m.E=0, \;\; (m\geq n_1),$$ and so 
$$E.\overline{I_m}=\overline{I_m}.E=0,  \;\;\;( m\geq n_1).$$
Now set $n=\max\{n_1, n_0\}$. In view of the above arguments
$$D_n:\frac{\mathcal A}{\ker p_n}\to  E^* , \;\;\;\;D_n(a+\ker p_n)=D(a),$$
is a continuous derivation and $\overline{I_n}.E=E.\overline{I_n}=0$ and
the unique extention of $D_n$ to the Banach algebra 
$\mathcal A_n$ is a continuous derivation also denoted by $D_n$.
Since $\mathcal A_n$ is amenable modulo $\overline{I_n}$, there exists
$\varphi\in E^*$ such that
$$D_n(a_n)=a_n.\varphi-\varphi.a_n \;\;\;\;( a_n\in \mathcal A_n\setminus \overline{I_n}). $$
Therefore $D(a)=a.\varphi- \varphi.a$ for each $a\in \mathcal A\setminus I$. Note that
if $a=(a_n)\notin I=\underleftarrow{\lim}\overline{I_n}$, then there is an $n_0\in\mathbb{N}$
such that $a_{n_0}\notin \overline{I_{n_0}}$. Since the mappings $\varphi_{m{n_0}}$ 
are defined by $\varphi_{m{n_0}}(a_m)=a_{n_0}$, then $a_m\notin \overline{I_m}$, for
each $m\geq n_0$. So we can assume that $a_n\in \mathcal A_n\setminus \overline{I_n}$ for each $n\geq n_0$.
 So $\mathcal A$ is amenable modulo $I$.
\end{proof}
The following theorem is a generalization of \cite[Theorem 1]{Rahimi  Amini}. The 
proof is completely different from the Banach algebra case.
\begin{theorem}\label{t12}
Let $(\mathcal A,p_n)$ be a Fr$\acute{e}$chet algebra and
$I$ be a closed ideal of $\mathcal A$. Then the following statements hold;
\begin{enumerate}
\item[(i)]  If $\mathcal A$ is amenable modulo $I$, then $\frac{\mathcal A}{I}$
is amenable.
 \item[(ii)]
If $\mathcal A$ is amenable modulo $I$ and $I$ is amenable, then 
$\mathcal A$ is amenable.
 \item[(iii)] If $\frac{\mathcal A}{I}$ is amenable and $I^2=I$, then 
$\mathcal A$ is amenable modulo $I$.
\end{enumerate}
\end{theorem}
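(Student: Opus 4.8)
The plan is to handle all three parts by moving continuous derivations between $\mathcal A$, the closed ideal $I$, and the quotient $\mathcal A/I$, and then applying amenability on the algebra where it is available. For (i), I would take a Banach $\mathcal A/I$-bimodule $E$ together with a continuous derivation $D:\mathcal A/I\to E^*$. Pulling the module structure back along the quotient map $\pi:\mathcal A\to\mathcal A/I$ makes $E$ a Banach $\mathcal A$-bimodule on which $I$ acts trivially, so $I\cdot E=E\cdot I=0$, and $D\circ\pi:\mathcal A\to E^*$ is a continuous derivation. Amenability of $\mathcal A$ modulo $I$ then produces $\varphi\in E^*$ with $(D\circ\pi)(a)=a\cdot\varphi-\varphi\cdot a$ for every $a\in\mathcal A\setminus I$. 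Since $\pi$ carries $\mathcal A\setminus I$ onto $(\mathcal A/I)\setminus\{0\}$ and the value at $0$ is forced, $D$ is inner on all of $\mathcal A/I$, so $\mathcal A/I$ is amenable.

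Part (iii) I would prove by a dual factorization. Let $E$ be a Banach $\mathcal A$-bimodule with $E\cdot I=I\cdot E=0$ and let $D:\mathcal A\to E^*$ be a continuous derivation. The hypothesis forces $I\cdot E^*=E^*\cdot I=0$, so for $a,b\in I$ the derivation rule gives $D(ab)=a\cdot D(b)+D(a)\cdot b=0$; since $I^2=I$, linearity makes $D$ vanish on all of $I$. Hence $D$ descends to a well-defined map $\overline D:\mathcal A/I\to E^*$, which one checks to be a continuous derivation into the dual of the Banach $\mathcal A/I$-bimodule $E$. Amenability of $\mathcal A/I$ renders $\overline D$ inner, and transporting the implementing functional back through $\pi$ shows $D$ is inner on $\mathcal A$, in particular on $\mathcal A\setminus I$; thus $\mathcal A$ is amenable modulo $I$.

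For (ii) I would combine (i) with an extension argument. Given an arbitrary Banach $\mathcal A$-bimodule $E$ and a continuous derivation $D:\mathcal A\to E^*$, I first restrict to $I$: as $I$ is amenable, $D|_I$ is implemented by some $\psi\in E^*$, and after replacing $D$ by $D-\delta_\psi$ I may assume $D|_I=0$. Then for $a\in I$, $b\in\mathcal A$ the identities $D(ab)=a\cdot D(b)$ and $D(ba)=D(b)\cdot a$, together with $ab,ba\in I$, give $I\cdot D(\mathcal A)=D(\mathcal A)\cdot I=0$; so $D$ takes values in the closed submodule $X=\{f\in E^*:I\cdot f=f\cdot I=0\}$, which I would identify with the dual of the Banach $\mathcal A/I$-bimodule $E/\overline{I\cdot E+E\cdot I}$. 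Because $D|_I=0$, $D$ factors through $\pi$ as a continuous derivation $\overline D:\mathcal A/I\to X$, and amenability of $\mathcal A/I$ (from part (i)) makes $\overline D$, and hence $D$, inner; undoing the initial adjustment finishes the proof.

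The main obstacle is the last step of (ii): one must verify, in the Fr\'echet setting, that the restriction of $E$ to $I$ is a genuine Banach $I$-bimodule to which amenability of $I$ applies, that the identification $X\cong(E/\overline{I\cdot E+E\cdot I})^*$ is compatible with the bimodule actions, and that $\overline D$ is continuous (which follows from the openness of $\pi$). A cleaner alternative for (ii) is to pass to an Arens--Michael decomposition: by Theorem~\ref{T1} each $\mathcal A_n$ is amenable modulo $\overline{I_n}$, so each $\mathcal A_n/\overline{I_n}$ is amenable by the Banach analogue of (i), while amenability of $I$ forces each $\overline{I_n}$ to be amenable; the classical extension theorem for amenable Banach algebras then yields that each $\mathcal A_n$ is amenable, and \cite[Theorem 9.5]{Pir} upgrades this to amenability of $\mathcal A=\underleftarrow{\lim}\mathcal A_n$.
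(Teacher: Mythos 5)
Your proposal is correct, but it takes a genuinely different route from the paper for parts (i) and (iii). The paper proves all three statements by one uniform reduction: it fixes an Arens--Michael decomposition $\mathcal A=\underleftarrow{\lim}\mathcal A_n$, $I=\underleftarrow{\lim}\overline{I_n}$, invokes Theorem~\ref{T1} to get that each $\mathcal A_n$ is amenable modulo $\overline{I_n}$, uses the quotient decomposition $\frac{\mathcal A}{I}=\underleftarrow{\lim}\frac{\mathcal A_n}{\overline{I_n}}$ (Fragoulopoulou, Theorem 3.14) together with the Banach-algebra version of the theorem (Amini--Rahimi, Theorem 1), and then passes back to the Fr\'echet level with Pirkovskii's Theorem 9.5; even its proof of (iii), which does manipulate derivations, performs the factorization at the level of the Banach algebras $\mathcal A_n$. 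You instead argue intrinsically at the Fr\'echet level: in (i) you pull a derivation on $\mathcal A/I$ back along $\pi$, observe that $\pi(\mathcal A\setminus I)=(\mathcal A/I)\setminus\{0\}$ and that innerness at $0$ is automatic; in (iii) you use $I\cdot E^*=E^*\cdot I=0$ and $I^2=I$ to kill $D$ on $I$ and descend through $\pi$ (continuity of $\overline D$ coming from the quotient topology) --- this even yields innerness on all of $\mathcal A$, stronger than the definition requires. Your direct argument for (ii) is the classical ``ideal and quotient amenable implies algebra amenable'' extension theorem transplanted to Fr\'echet algebras: subtract the inner part implementing $D|_I$, check $D'(\mathcal A)\subseteq X=\{f\in E^*: I\cdot f=f\cdot I=0\}\cong\bigl(E/\overline{\operatorname{span}}(I\cdot E+E\cdot I)\bigr)^*$, and factor through $\mathcal A/I$; the verifications you flag (the $I$-bimodule structure on $E$, the duality identification, continuity of $\overline{D'}$ via openness of $\pi$) are all routine and do go through, so this is a complete proof strategy, and your fallback for (ii) via the decomposition is exactly the paper's argument. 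What each approach buys: yours is more elementary and self-contained --- it does not depend on Theorem~\ref{T1} (whose proof has its own delicate points), on the Banach-case theorem of Amini--Rahimi, or on Pirkovskii's characterization; the paper's buys uniformity across all three parts and offloads all analytic verifications to known Banach-algebra results.
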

\begin{proof}
$(i)$. Let $\mathcal A=\underleftarrow{\lim}\mathcal A_n$ be an
 Arens-Michael decomposition of $\mathcal A$ and   $I=\underleftarrow{\lim}\overline{{I_n}}$ be 
an Arens-Michael decomposition of $I$.  By the hypothesis $\mathcal A$ is amenable modulo $I$,
so  by using Theorem \ref{T1}, 
$\mathcal A_n$ is amenable modulo $\overline{I_n}$,  for each $n\in\mathbb{N}$. 
Moreover 
$\frac{\mathcal A}{I}=\underleftarrow{\lim}\frac{\mathcal A_n}{\overline{I_n}}$
is an Arens-Michael decomposition of $\frac{\mathcal A}{I}$, by \cite[Theorem 3.14]{F}. The 
assertion now follows from \cite[Theorem 9.5]{Pir} and \cite[Theorem 1]{Rahimi  Amini}.\\

$(ii)$. Let $\mathcal A=\underleftarrow{\lim}\mathcal A_n$ be an
 Arens-Michael decomposition of $\mathcal A$. By \cite[Theorem 9.5]{Pir}
it is sufficient to show that $\mathcal A_n$ is amenable for each
$n\in\mathbb{N}$, for amenability of $\mathcal A$. Suppose that 
 $I=\underleftarrow{\lim}\overline{I_n}$ is 
an Arens-Michael decomposition of $I$. Since $\mathcal A$ 
is amenable modulo $I$, Theorem \ref{T1}, implies that 
$\mathcal A_n$ is amenable modulo $\overline{I_n}$, for each $n\in\mathbb{N}$
and by \cite[Theorem 9.5]{Pir}, $\overline{I_n}$, is amenable
 for each $n\in\mathbb{N}$.
Consequently  by \cite[Theorem 1]{Rahimi Amini}, $\mathcal A_n$
is amenable for each $n\in\mathbb{N}$.\\
$(iii)$. Let $E$ be a Banach $\mathcal A$-bimodule 
 such that $I.E=E.I=0$ and $D:\mathcal A\to E^*$
be a continuous derivation. Suppose that 
$\mathcal A=\underleftarrow{\lim}\mathcal A_n$ is an
 Arens-Michael decomposition of $\mathcal A$.
 Since $(E^*,\|\cdot\|)$ is a Banach space, by similar arguments to the
 proof of  Theorem \ref{T1}, there exists $n\in\mathbb{N}$
 and a continuous derivation on $\mathcal A_n$ defined by; 
 $$D_n:\mathcal A_n\to E^*,\;\;\; D_n(a_n)=D_n(a+\ker p_n)=D(a),\;\;\;\overline{I_n}.E=E.\overline{I_n}=0. $$
On the other hand since $\varphi_n$ is a continuous homomorphism we have; 
\begin{eqnarray*}
I_n=\varphi_n(I)=\varphi_n(I^2)&=&
\varphi_n( span\{ab:\;\;\;\;a,b\in I\})\\
&=& span\{\varphi_n(ab):\;\;\; a,b\in I\}\\
&=&span\{\varphi_n(a)\varphi_n(b):\;\;\; a,b\in I\}\\
&=&span\{a_nb_n:\;\;\; a_n,b_n\in I_n\}=I^2_n.
\end{eqnarray*}
Also ${I_n}.E^*=E^*.{I_n}=0$.
Therefore $I_n\subseteq \ker D_n$, but
$\ker D_n$ is a closed subspace of $\mathcal A_n$, so $\overline{I_n}\subseteq \ker D_n$.
Thus we can define a 
continuous derivation  $\widetilde{D_n}:\frac{{\mathcal A}_n}{\overline{I_n}}\to E^*$, by
$\widetilde{D_n}(a_n+\overline{I_n})=D_n(a_n)$.
On the other hand
$\frac{\mathcal A}{I}=\underleftarrow{\lim}\frac{\mathcal A_n}{\overline{I_n}}$
is an Arens-Michael decomposition of $\frac{\mathcal A}{I}$,  by \cite[Theorem 3.14]{F}.  
 Since $\frac{\mathcal A}{I}$ is amenable, so $\frac{\mathcal A_n}{\overline{I_n}}$ is an amenable Banach algebra
for each $n\in\mathbb{N}$  by
\cite[Theorem 9.5]{Pir}.
Therefore there exists $\eta_n\in E^*$ such that 
$\widetilde{D_n}(a_n+\overline{I_n})=(a_n +\overline{I_n}).\eta_n- \eta_n.(a_n +\overline{I_n})$ for each 
$a_n\in{\mathcal A}_n$.
So for each $a\in\mathcal A\setminus I$ we have
\begin{eqnarray*}
D(a)=D_n(a+\ker p_n)=D_n(a_n)&=&
\widetilde{D_n}(a_n+\overline{I_n})\\&=&(a_n +\overline{I_n}).\eta_n- \eta_n.(a_n +\overline{I_n})\\
&=& a_n.\eta_n-\eta_n.a_n\\
&=&a.\eta_n-\eta_n.a.
\end{eqnarray*}

Thus $\mathcal A$ is amenable modulo $I$.
\end{proof}

Some examples of amenable modulo an ideal of Banach algebras 
which are not amenable can be found in \cite{Rahimi  Amini}. Before we 
proceed to examples of amenability modulo an ideal for  Fr$\acute{e}$chet algebras
we give some necessary background material. 
We shall give the definitions and some basic properties of semigroup algebras.

Let $S$ be a semigroup and $s\in S$, and let $\delta_s$ denote the function on $S$
which is $1$ at $s$, and $0$ elsewhere. A  generic element of $\ell^1(S)$ is of the form
$$f=\sum_{s\in S}\alpha_s \delta_s, \;\;\;\sum_{s\in S}|\alpha_s|<\infty.$$
Now consider $f=\sum_{r\in S}\alpha_r\delta_r$ and $ g=\sum_{s\in S}\beta_s \delta_s\in \ell^1(S)$.
Set 
$$f\star g=\sum_{r\in S}\alpha_r\delta_r\star\sum_{s\in S}\beta_s \delta_s=\sum_{t\in S}(\sum_{rs=t}\alpha_r\beta_s)\delta_t,$$
where $\sum_{rs=t}\alpha_r\beta_s=0$ when there are no elements $r$ and $s$ in $S$
with $rs=t$. Then $(\ell^1(S), \star)$ is called the semigroup algebra  of $S$. Take 
$$\|f\|_1=\sum_{s\in S}|\alpha_s|.$$
Then  $(\ell^1(S), \star,\|\cdot\|_1)$  is a Banach algebra. If 
$\theta:S\to T$ is an ephimorphism of semigroups, then  by \cite{Dales}  there exists a contractive 
ephimorphism $\theta_{*}: \ell^1(S)\to  \ell^1(T)$ determined by 
$$\theta_{*}(\delta_s)=\delta_{\theta(s)},\;\;\;(s\in S).$$
Let $S$ be a semigroup. A relation $R$ on the set $S$ is called left[respectively, right]
compatible if $s,t,a\in S$ and $(s,t)\in R$ implies that $(as,at)\in R)$[ respectively, $ (sa,ta)\in R$] and it is called
compatible if $s, t, s^{'}, t^{'}\in S$ and $(s,t)\in R $ and $(s^{'},t^{'})\in R$ implies 
$(ss^{'},tt^{'})\in R$. A compatible equivalence relation is called congruence.
By \cite[Theorem 1.5.2]{howie} if $\rho$ is a congruence on the semigroup $S$, then
the quotient set $\frac{S}{\rho}$ is a semigroup with respect to the operation defined by
$$(a\rho)(b\rho)=(ab)\rho,\;\;\; (a,b\in S).$$
A congruence $\rho$ on $S$ is called a group congruence on $S$ if  $\frac{S}{\rho}$
is a group. We denote the least group congruence on $S$ by $\sigma$.
Also we denote the set of idempotent elements of $S$ by $E(S)$. A semigroup
$S$ is called an $E$-semigroup if $E(S)$ forms a subsemigroup of $S$ and $E$-inverse
if for all $s\in S$ there exists $t\in S$  such that $st\in E(S)$.
An inverse semigroup $S$ is called $E$-unitary if for each $s\in S$
and $e\in E(S)$,  $es\in E(S)$ implies $s\in E(S)$. According to \cite{Rahimi  Amini},
if $R$ is  a ring and $S$ is a semigroup, the semigroup ring $R[S]$ is the ring whose
elements are of the form $\sum_{s\in S}r_s s$, where $r_s\in R$ and all but 
infinitely many of the cofficient are zero. If $R=K$ is a field, then $K[S]$ is called a semigroup 
algebra. Now let $S$ be a  semigroup, $\rho$ a congruence on $S$ and $\pi: S\to\frac{S}{\rho}$
be the quotient map. Then one can extends $\pi$ to an algebra ephimorphim
$$\pi_{*}:K[S]\to K[\frac{S}{\rho}],$$
whose kernel $I_{\rho}$ is the ideal in $K[S]$, generated by the set 
$$\{s-t,\;\;\;s,t\in S\;\;\;with \;\;(s,t)\in \rho\}.$$
Hence $K[\frac{S}{\rho}]\cong\frac{K[S]}{I_{\rho}}$.

Now we provide some examples of Fr$\acute{e}$chet algebras which are amenable
modulo an ideal $I$, but are not amenable.
\begin{example}

Let $\mathcal S=(S_n,\theta_n^m)_{n,m\in\mathbb{N}}$ be an inverse sequence of semigroups such that 
the linking maps $\theta_{n}^{m}$ are onto.
Set $\mathcal {L}^1(\mathcal S)=\underleftarrow{\lim}(\ell^1(S_n), (\theta_n^m)_*)$. Clearly 
$\mathcal {L}^1(\mathcal S)$ is a Fr$\acute{e}$chet algebra.
\begin{enumerate}  

\item[(1)] If $S_n$ is an amenable $E$-unitary  inverse semigroup
with $E(S_n)$ infinite, then $\ell^1(S_n)$ is not amenable but is amenable  modulo $I_{\sigma_n}$, by \cite{Rahimi Amini} .
So  $\mathcal {L}^1(\mathcal S)$ is not amenable by \cite[Theorem 9.5]{Pir}, but
 by applying Theorem \ref{T1}, $\mathcal {L}^1(\mathcal S)$
is amenable modulo $I_{\sigma}=\underleftarrow{\lim}I_{\sigma_n}$.

 \item[(2)]
For each $n\in\mathbb{N}$, let $G_n$ be an  amenable group with identity $1_n$
and $T$ be an abelian semigroup with infinitely many idempotents which is not an inverse semigroup.
Also set $S_n=G_n\times T$.
Then by applying \cite[Example (iii)]{Rahimi Amini}, for each $n\in\mathbb{N}$,
 $\ell^1(S_n)$ is amenable modulo $I_{\sigma_n}$ but is not amenable Banach algebra.
  Therefore $\mathcal {L}^1(\mathcal S)$ 
is not amenable by \cite[Theorem 9.5]{Pir}, but by applying Theorem\ref{T1}, $\mathcal {L}^1(\mathcal S)$
is amenable modulo $I_{\sigma}=\underleftarrow{\lim}I_{\sigma_n}$.

All amenable  Fr$\acute{e}$chet algebras are amenable modulo $I$ for 
each closed ideal.
In the next example we give  a Fr$\acute{e}$chet algebra
which is amenable by  \cite[Corollary 9.8]{Pir}.

\item[(3)] All nuclear $\sigma-$ $C^*-$algebras are amenable and so amenable modulo an ideal $I$
for each closed ideal $I$. (see \cite[Corollary 9.8]{Pir} for more details)

In the next example we give  a Fr$\acute{e}$chet algebra which is not amenable 
and so is not amenable modulo $I=\{0\}$.

\item[(4)] Let $C^{\infty}([0,1])$ be the space of infinitely many differentiable  functions on $[0,1]$
with pointwise multiplication.Then $C^{\infty}([0,1])$ is a Fr$\acute{e}$chet algebra
with respect to the system of seminorms $p_n$ given by 
$$p_n(f)=2^{n-1}\sup\{|f^{(k)}(x)|:\;\;\;x\in[0,1], k=0,...,n-1\}.$$
 $C^{\infty}([0,1])$
is not weakly amenable   by
\cite[Theorem 1.3]{ARR1} and so 
is not amenable by \cite[Theorem 9.6]{Pir}. Therefore $C^{\infty}([0,1])$
is not amenable modulo $I=\{0\}$.\\
\end{enumerate}
\end{example}

\section{\bf Locally bounded approximate identity   modulo an ideal of a Fr$\acute{e}$chet algebra}
We recall from \cite{Rahimi1} the concept of bounded approximate identity modulo an 
ideal for a Banach algebra. A Banach algebra $\mathcal A$ has a bounded approximte identity 
modulo $I$ if there exists a bounded net $(u_{\alpha})_{\alpha}$ in
 $\mathcal A$ such that 
 $$\lim_{\alpha} u_{\alpha}a=\lim_{\alpha} au_{\alpha}=a, \;\;\;\;(a\in \mathcal A\setminus I).$$
Also in \cite{Pir} Pirkovskii asserts the concept of locally bounded approximate 
identity for a locally convex algebra. Similar to these definitions we 
define bounded and locally bounded approximate identities modulo an
ideal for a  Fr$\acute{e}$chet algebra.

\begin{definition}\label{D2}\rm
Let $(\mathcal A, p_{\lambda})$ be a locally 
convex algebra and 
 suppose that $I$ is a closed ideal
of  $\mathcal A$. A bounded net $(e_{\alpha})_{\alpha}$ is a bounded 
approximate identity modulo $I$  for $\mathcal A$, if 
$$\lim_{\alpha}p_{\lambda}(a e_{\alpha}-a)=\lim_{\alpha}p_{\lambda}
( e_{\alpha}a-a)=0,\;\;\;\;(a\in \mathcal A
\setminus I,\;\; \lambda\in \Lambda).$$ 
 Furthermore we say that $\mathcal A$ has a locally bounded approximate 
identity modulo $I$, if there exists a family $\{C_{\lambda} :\;\;\; \lambda\in\Lambda\}$
of positive real numbers such that for each finite set $F\subseteq \mathcal A\setminus I$,
 each  $\lambda\in\Lambda$, and each $\varepsilon>0$ 
there exists $b\in\mathcal A$  with $p_{\lambda}(b)\leq C_{\lambda}$
and $p_{\lambda}(a b-a)<\varepsilon$ and $p_{\lambda}( ba-a)<\varepsilon$, 
for all $a\in F$.
\end{definition}

We commence with the following proposition which gives us necessary and sufficient
conditions  for the exitence of a bounded approximate identity modulo an ideal for a 
 Fr$\acute{e}$chet algebra.
\begin{proposition}\label{P3}
Let $(\mathcal A, p_n)$ be a Fr$\acute{e}$chet algebra and $I$ be a closed ideal of
 $\mathcal A$. $\mathcal A$ has a bounded approximate identity modulo $I$ if and only if
there exists a bounded set $B\subseteq \mathcal A$ such that for each finite set 
$ F\subseteq \mathcal A\setminus I$, each $n\in\mathbb{N}$ and each $\varepsilon>0$ there exists 
$b\in  B$ such that 
$$p_n(ab-a)<\varepsilon \;\;\;\; and \;\;\;\;p_n(ba-a)<\varepsilon,$$
for each $a\in F$.
\end{proposition}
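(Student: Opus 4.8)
The plan is to prove both implications by the standard device relating a single bounded set carrying the stated approximation property to a genuine bounded net, the only extra bookkeeping being that a Fr\'echet topology is governed by a whole increasing sequence of seminorms $(p_n)$ rather than a single norm.

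For the easy implication ($\Rightarrow$), I would let $(e_\alpha)_{\alpha\in A}$ be a bounded approximate identity modulo $I$ and simply take $B=\{e_\alpha:\alpha\in A\}$, which is bounded precisely because the net is. Given a finite set $F\subseteq\mathcal A\setminus I$, an index $n$, and $\varepsilon>0$, for each $a\in F$ the two defining limits $\lim_\alpha p_n(ae_\alpha-a)=\lim_\alpha p_n(e_\alpha a-a)=0$ supply an index $\alpha_a$ past which both quantities lie below $\varepsilon$; since $F$ is finite and $A$ is directed, I can pick a single $\alpha_0$ dominating all the $\alpha_a$, and then $b=e_{\alpha_0}\in B$ satisfies $p_n(ab-a)<\varepsilon$ and $p_n(ba-a)<\varepsilon$ simultaneously for every $a\in F$.

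The substantive direction is ($\Leftarrow$): from the bounded set $B$ I must manufacture a bounded net, and the key step is to choose the correct index set. I would take $\Lambda$ to be the collection of triples $\alpha=(F,n,\varepsilon)$ with $F\subseteq\mathcal A\setminus I$ finite, $n\in\mathbb N$, and $\varepsilon>0$, ordered by declaring $(F_1,n_1,\varepsilon_1)\leq(F_2,n_2,\varepsilon_2)$ whenever $F_1\subseteq F_2$, $n_1\leq n_2$, and $\varepsilon_1\geq\varepsilon_2$. This is directed, since $(F_1\cup F_2,\max\{n_1,n_2\},\min\{\varepsilon_1,\varepsilon_2\})$ dominates any two given triples. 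For each $\alpha=(F,n,\varepsilon)$ the hypothesis furnishes an element $e_\alpha\in B$ with $p_n(ae_\alpha-a)<\varepsilon$ and $p_n(e_\alpha a-a)<\varepsilon$ for all $a\in F$; this defines a net $(e_\alpha)_{\alpha\in\Lambda}\subseteq B$, which is thus bounded.

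Finally I would verify that $(e_\alpha)_\alpha$ is a bounded approximate identity modulo $I$ in the sense of Definition \ref{D2}. Fix $a\in\mathcal A\setminus I$, an index $m\in\mathbb N$, and $\delta>0$, and put $\alpha_0=(\{a\},m,\delta)$. For every $\alpha=(F,n,\varepsilon)\geq\alpha_0$ one has $a\in F$, $n\geq m$, and $\varepsilon\leq\delta$, hence $p_n(ae_\alpha-a)<\varepsilon\leq\delta$; since the fundamental system is increasing, $p_m\leq p_n$, so $p_m(ae_\alpha-a)<\delta$, and likewise for $e_\alpha a-a$. This yields $\lim_\alpha p_m(ae_\alpha-a)=\lim_\alpha p_m(e_\alpha a-a)=0$ for every $m$, as required. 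The one point needing care, and the nearest thing to an obstacle, is exactly this interchange of seminorm indices: the hypothesis controls $p_n$ with $n$ allowed to grow along the net, whereas the definition demands control of each fixed $p_m$, and it is the monotonicity of $(p_n)$ that bridges the two.
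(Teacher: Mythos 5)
Your proposal is correct and follows essentially the same route as the paper: take $B$ to be the range of the net in one direction, and in the other direction index a net by triples $(F,n,\varepsilon)$ ordered by inclusion, increasing seminorm index, and decreasing tolerance, using the monotonicity $p_m\leq p_n$ to pass from the growing index $n$ back to a fixed $p_m$. Your forward direction is in fact slightly more careful than the paper's, which asserts a single $\alpha_0$ uniformly over all $a\in\mathcal A\setminus I$ rather than handling the finitely many elements of $F$ by directedness as you do.
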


\begin{proof}
First suppose that $(e_{\alpha})_{\alpha}\subseteq  {\mathcal A}$
is a bounded approximate identity modulo $I$ for $\mathcal A$.
So for   each  $n\in \mathbb{N}$ and each $\varepsilon>0$ 
there exists $\alpha_0$ such that for each $\alpha\geq \alpha_0$ we have 
$${p}_n(a e_{\alpha}-a)<\varepsilon \;\;\; and\;\;\;\;  p_n(e_{\alpha}a-a)<\varepsilon,\;\;\;\;(a\in \mathcal A\setminus  I).$$
Now  set $B=(e_{\alpha})_{\alpha}\subseteq  {\mathcal A}$
and let $n\in \mathbb{N}$, $\varepsilon>0$  and $F\subseteq \mathcal A\setminus I$
be a finite set. Therefore there exists $b=m_{\alpha}$  which ${\alpha\geq \alpha_0}$
such that 
$$p_n(ab-a)<\varepsilon\;\;\;\;and \;\;\;
{p}_n(ba-{a})<\varepsilon,\;\;\;(a\in F). $$
Conversely, Suppose that there exists a bounded set $B\subseteq {\mathcal A}$
such that   for each finite set $F\subseteq \mathcal A\setminus I$,
 each  $n\in \mathbb{N}$ and each $\varepsilon>0$ 
there exists $b\in B$  with
$$p_n(ab-a)<\varepsilon\;\;\; and \;\;\; 
{p}_n(ba-a)<\varepsilon,\;\;\;(a\in F).$$
Now take 
$$S=\{(n,\varepsilon, F):\;\;n\in\mathbb{N},\;\;\varepsilon>0,\;\; F\subseteq \mathcal A\setminus I \;\;  is \;\;a\;\; finite\;\; set  \}.$$
So $S$ is a directed set as follows:
$$(n_1,\varepsilon_1, F_1)\leq (n_2,\varepsilon_2, F_2)\;\;\Leftrightarrow\;\;n_1\leq n_2,\;\;\varepsilon_2\leq\varepsilon_1, \;\;F_1\subseteq  F_2.$$
For each $\alpha=(n,\varepsilon, F)$, there exists $b=e_{\alpha}\in B$ and so 
we have a bounded net $(e_{\alpha})\subseteq {\mathcal A}$.
Furthermore let $k\in\mathbb{N}$ and $\varepsilon>0$ and the finite set $F\subseteq \mathcal A\setminus I$
be arbitrary. Therefore for each $(n,\delta, C)\geq(k,\varepsilon, F)$
we have
$${p}_{k}(a e_{\alpha}-{a})\leq{p}_n(a e_{\alpha}- {a})<\delta<\varepsilon,$$
$${p}_{k}(e_{\alpha}a -{a})\leq{p}_n(e_{\alpha}a- {a})<\delta<\varepsilon,\;\;\;\;(a\in \mathcal A\setminus  I). $$

So $\mathcal A$ has a bounded approximate identity modulo $I$.
\end{proof}
\begin{remark}\label{rem1}
In view of Proposition \ref{P3}, if $\mathcal A$ is normable, then the notions of bounded and  locally bounded approximate
identity modulo $I$ are equivalent.
\end{remark}
By \cite[Theorem 4]{Rahimi1} it is known that if a Banach algebra $\mathcal{A}$ is amenable modulo an 
ideal $I$, it has a bounded approximate identity modulo $I$. The following results are interesting
in theirs own right. In fact we use them to extend \cite[Theorem 4]{Rahimi1}
for Fr$\acute{e}$chet algebrs.
\begin{proposition}\label{P2}
Let  $\varphi :\mathcal A\to \mathcal B$ be a continuous homomorphism
of  Fr$\acute{e}$chet algebras with dense range and let 
$I$ be a closed ideal of $\mathcal A$ and $J$ be a closed ideal of $\mathcal B$
such that $\varphi(I)\subseteq J$.  Suppose that
$\mathcal{A}$ has a  locally bounded approximate identity modulo $I$.
 Then $\mathcal B$ has a locally bounded approximate identity  modulo $J$.
\end{proposition}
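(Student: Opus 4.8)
The plan is to transport the locally bounded approximate identity of $\mathcal{A}$ through $\varphi$ and push it into $\mathcal{B}$, exactly mirroring the density argument of Proposition \ref{P1}. Write $(p_n)$ and $(q_m)$ for the defining seminorms of $\mathcal{A}$ and $\mathcal{B}$, and let $\{C_n\}$ be the family of constants witnessing that $\mathcal{A}$ has a locally bounded approximate identity modulo $I$. By continuity of $\varphi$ there are, for each $m$, an index $n(m)$ and a constant $K_m>0$ with $q_m(\varphi(x))\leq K_m\, p_{n(m)}(x)$ for all $x\in\mathcal{A}$. First I would declare the candidate family of constants for $\mathcal{B}$ to be $D_m:=K_m C_{n(m)}$; the crucial point is that this depends only on $m$ and not on the data $(G,\varepsilon)$ appearing in Definition \ref{D2}.

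Next, fix a finite set $G\subseteq\mathcal{B}\setminus J$, an index $m$, and $\varepsilon>0$. The key ``modulo an ideal'' step is to approximate each $g\in G$ by an image $\varphi(a_g)$ with $a_g$ lying outside $I$. Since $J$ is closed, the set $V_g:=(\mathcal{B}\setminus J)\cap\{y:q_m(y-g)<\delta\}$ is open and contains $g$, so by density of $\varphi(\mathcal{A})$ I can choose $a_g\in\mathcal{A}$ with $\varphi(a_g)\in V_g$; the tolerance $\delta>0$ will be fixed at the end. Because $\varphi(I)\subseteq J$ and $\varphi(a_g)\notin J$, necessarily $a_g\notin I$, so $F:=\{a_g:g\in G\}$ is a finite subset of $\mathcal{A}\setminus I$. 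Applying the locally bounded approximate identity modulo $I$ of $\mathcal{A}$ to the data $(n(m),F,\varepsilon')$ yields $b\in\mathcal{A}$ with $p_{n(m)}(b)\leq C_{n(m)}$ and $p_{n(m)}(a_g b-a_g)<\varepsilon'$, $p_{n(m)}(b a_g-a_g)<\varepsilon'$ for all $g\in G$.

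Finally, set $c:=\varphi(b)$. Then $q_m(c)\leq K_m\, p_{n(m)}(b)\leq K_m C_{n(m)}=D_m$, giving the required bound. For the approximate-identity estimate I would split
$$gc-g=(g-\varphi(a_g))c+\varphi(a_g b-a_g)+(\varphi(a_g)-g),$$
using $\varphi(a_g)c=\varphi(a_g b)$. Submultiplicativity of $q_m$ together with $q_m(\varphi(a_g b-a_g))\leq K_m\, p_{n(m)}(a_g b-a_g)<K_m\varepsilon'$ gives $q_m(gc-g)\leq\delta(D_m+1)+K_m\varepsilon'$, and symmetrically for $cg-g$ via $\varphi(b)\varphi(a_g)=\varphi(b a_g)$. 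Choosing $\delta<\varepsilon/(2(D_m+1))$ and $\varepsilon'<\varepsilon/(2K_m)$ forces both quantities below $\varepsilon$, which is exactly Definition \ref{D2} for $\mathcal{B}$ modulo $J$.

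I expect the only genuinely delicate point to be the first step: verifying that the approximants $a_g$ can be taken in $\mathcal{A}\setminus I$ simultaneously with the $q_m$-closeness, which relies on $J$ being closed (so that $\mathcal{B}\setminus J$ is open) together with $\varphi(I)\subseteq J$. Everything else is routine seminorm bookkeeping, with the one structural subtlety that the constants $D_m$ must be pinned down before the quantifiers over $G$ and $\varepsilon$ are unwound, as the definition of a locally bounded approximate identity demands.
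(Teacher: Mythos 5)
Your proof is correct and follows essentially the same route as the paper's: fix the constants $D_m=K_mC_{n(m)}$ via continuity of $\varphi$ before unwinding the quantifiers, approximate elements of $\mathcal B\setminus J$ by images $\varphi(a_g)$ with $a_g\in\mathcal A\setminus I$, apply the locally bounded approximate identity of $\mathcal A$, and conclude with the same three-term triangle-inequality split. In fact you are slightly more careful than the paper at one point: the paper simply asserts that the approximating set $F$ can be chosen inside $\mathcal A\setminus I$, whereas you justify it explicitly via the openness of $\mathcal B\setminus J$ and the hypothesis $\varphi(I)\subseteq J$.
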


\begin{proof}
Let $\{p_n:\; n\in\mathbb{N}\}$ be a family of fundamental system of seminorms
generating the topology of $\mathcal A$ and  $ \{q_m:\; m\in\mathbb{N}\}$
generating the topology of $\mathcal B$. Continuity of $\varphi$
implies for each $m\in\mathbb{N}$ the existence of $k_m>0$ and 
$n_0\in\mathbb{N} $  such that
$$q_m(\varphi(a))\leq k_m p_{n_0}(a),\;\;\;(a\in\mathcal A).$$
Since $\mathcal A$ has a locally bounded approximate identity modulo $I$
one concludes from Definition \ref{D2}, there exists a family $\{C_n:\;\;n\in\mathbb{N}\}$
of positive real numbers, such that for each finite set $F\subseteq \mathcal A\setminus I$,
each $n\in\mathbb{N}$ and each $\varepsilon>0$ there exists $a^{'}\in\mathcal A$
such that
 $$p_n(a^{'})\leq C_n\;\;\;and\;\;\; p_n(a-a a^{'})<\frac{\varepsilon}{3k_m}\;\;\; and \;\;\;p_n(a-a^{'}a)<\frac{\varepsilon}{3k_m},$$
for each $a\in F$.
Without loss of generality we may assume that $k_m $ and $ C_{n_0}>1$ for each $m\in\mathbb{N}$.
Now consider the family $\{k_mC_{n_0}:\;\;m\in\mathbb{N}\}$ of positive real numbers.
Given a finite set $F^{'}\subseteq \mathcal B\setminus J$,  $\varepsilon>0$ and $m\in\mathbb{N}$, 
find a finite set  $F\subseteq \mathcal A\setminus I$ such that, $q_m(\varphi(a)-b))<\frac{\varepsilon}{3k_mC_{n_0}}$,
for each $b\in F^{'}$ and $a\in F$. Now set $b^{'}=\varphi(a^{'})$. From our assumption
it follows that 
$$q_m(b^{'})=q_m(\varphi(a^{'}))\leq k_m p_{n_0}(a^{'})\leq k_m C_{n_0}$$
and 
\begin{eqnarray*}
q_m(b-bb^{'})&\leq&
q_m(\varphi(a-aa^{'}))+q_m(b-\varphi(a))+q_m((\varphi(a)-b)b^{'})\\
&\leq& k_m p_{n_0}(a-a a^{'})+\frac{\varepsilon}{3}+ q_m(\varphi(a)-b)q_m(b^{'})\\
&\leq& k_m\frac{\varepsilon}{3k_m}+\frac{\varepsilon}{3}+ \frac{\varepsilon}{3k_mC_{n_0}}k_m C_{n_0}\\
&=&\varepsilon,
\end{eqnarray*}
and similarly $q_m(b-b^{'}b)<\varepsilon$, for each $b\in F^{'}$.
\end{proof}

By using   Proposition \ref{P2} and \cite[Remark 6.2]{Pir}  the following is immediate.
\begin{corollary}
Let $(\mathcal A, p_n)$ be a Fr$\acute{e}$chet algebra and $I$ be a closed ideal of
 $\mathcal A$.  If $\mathcal A$ has a locally bounded approximate identity modulo $I$, then 
$\frac{\mathcal A}{I}$ has a locally bounded approximate identity.
\end{corollary}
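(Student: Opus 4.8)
The plan is to read off this corollary from Proposition \ref{P2} by specializing the homomorphism to the canonical quotient map and the target ideal to the zero ideal; the statement preceding it already signals that the argument is immediate, so the work is entirely in lining up the hypotheses.

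First I would let $\pi\colon\mathcal A\to\mathcal A/I$ be the quotient map. Since $I$ is a closed ideal of the Fr\'echet algebra $\mathcal A$, the quotient $\mathcal A/I$ is again a Fr\'echet algebra when equipped with the quotient seminorms $\bar p_n(a+I)=\inf_{x\in I}p_n(a+x)$, and $\pi$ is a continuous algebra homomorphism with respect to these seminorms. As $\pi$ is surjective it has dense range, so the standing hypotheses of Proposition \ref{P2} on the map $\varphi$ are satisfied with $\varphi=\pi$.

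Next I would take $\mathcal B=\mathcal A/I$ and $J=\{0\}$. Because $\mathcal A/I$ is Hausdorff, $\{0\}$ is a closed ideal, and evidently $\pi(I)=\{0\}=J$, so the inclusion $\pi(I)\subseteq J$ holds. By hypothesis $\mathcal A$ has a locally bounded approximate identity modulo $I$, so all the assumptions of Proposition \ref{P2} are in force, and it yields that $\mathcal B=\mathcal A/I$ has a locally bounded approximate identity modulo $J=\{0\}$.

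The only point requiring a word of care, and the main obstacle such as it is, is the identification of a locally bounded approximate identity modulo $\{0\}$ with an ordinary locally bounded approximate identity. Here I would note that in Definition \ref{D2} the defining conditions range over $a\in(\mathcal A/I)\setminus\{0\}$, while the ordinary notion ranges over all $a\in\mathcal A/I$; but for $a=0$ the requirements $\bar p_\lambda(ab-a)<\varepsilon$ and $\bar p_\lambda(ba-a)<\varepsilon$ hold trivially, so enlarging the index to include $0$ adds no new constraint. This is exactly the reduction recorded in \cite[Remark 6.2]{Pir}, and granting it the conclusion that $\mathcal A/I$ has a locally bounded approximate identity follows at once.
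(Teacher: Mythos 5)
Your proposal is correct and takes essentially the same route as the paper: the paper's (one-line) proof likewise specializes Proposition \ref{P2} to the quotient map $\pi\colon\mathcal A\to\mathcal A/I$ with target ideal $J=\{0\}$, and then invokes \cite[Remark 6.2]{Pir} for the identification of a locally bounded approximate identity modulo $\{0\}$ with an ordinary one, which you instead verify directly (and correctly) by noting the condition is vacuous at $a=0$. No gaps.
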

An immediate cosequence of Proposition \ref{P2} and Remark \ref{rem1} is the following result.
\begin{corollary}\label{Cor2}
Let  $\mathcal A$ and   $\mathcal B$ be two Banach algebras and 
$\varphi :\mathcal A\to \mathcal B$ be a continuous homomorphism
 with dense range. Suppose that  
$I$ is a closed ideal of $\mathcal A$ and $J$ is a closed ideal of $\mathcal B$
such that $\varphi(I)\subseteq J$.  If $\mathcal A$
has a   bounded approximate identity modulo $I$,
then $\mathcal B$ has a bounded approximate identity  modulo $J$.
\end{corollary}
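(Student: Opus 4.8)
The plan is to regard the two Banach algebras $\mathcal A$ and $\mathcal B$ as Fr$\acute{e}$chet algebras, each topologized by the single submultiplicative norm $\|\cdot\|$, so that the machinery of Proposition \ref{P2} becomes available; the corollary then follows by sandwiching that proposition between two applications of Remark \ref{rem1}. Concretely, I would view a Banach algebra as a normable Fr$\acute{e}$chet algebra whose countable fundamental system of seminorms is taken to be the constant family $p_n=\|\cdot\|$.

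First I would use that $\mathcal A$ is normable and, by hypothesis, carries a bounded approximate identity modulo $I$. Remark \ref{rem1} (which rests on Proposition \ref{P3}) then upgrades this to a locally bounded approximate identity modulo $I$. Next I would feed the resulting data into Proposition \ref{P2}: the map $\varphi$ is a continuous homomorphism of Fr$\acute{e}$chet algebras with dense range, $I$ and $J$ are closed ideals with $\varphi(I)\subseteq J$, and $\mathcal A$ now possesses a locally bounded approximate identity modulo $I$. The conclusion of Proposition \ref{P2} delivers a locally bounded approximate identity modulo $J$ for $\mathcal B$.

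Finally, since $\mathcal B$ is itself normable, I would invoke Remark \ref{rem1} once more, this time in the opposite direction, to convert the locally bounded approximate identity modulo $J$ back into an honest bounded approximate identity modulo $J$. Chaining these three steps yields the assertion.

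As for the main obstacle: there is essentially none of analytic substance, since every quantitative estimate was already carried out inside the proof of Proposition \ref{P2}. The only point requiring care is the bookkeeping of the two reductions via Remark \ref{rem1}, namely ensuring that normability is legitimately invoked for both $\mathcal A$ and $\mathcal B$ and that the equivalence of bounded and locally bounded approximate identities modulo an ideal is applied in both directions; this is exactly why the statement is advertised as an immediate consequence of Proposition \ref{P2} and Remark \ref{rem1}.
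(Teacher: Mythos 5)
Your proof is correct and is exactly the argument the paper intends: the paper states the corollary as an immediate consequence of Proposition \ref{P2} and Remark \ref{rem1}, and your chain (Remark \ref{rem1} on $\mathcal A$, then Proposition \ref{P2}, then Remark \ref{rem1} on $\mathcal B$) is precisely that deduction, with the details of regarding a Banach algebra as a normable Fr\'echet algebra made explicit.
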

It is not hard to see the following lemma holds.
\begin{lemma}\label{Lem1}
Let $D$ be a dense subspace of normed algebra $\mathcal A$. If $\mathcal A$
has a bounded approximate identity $(e_{\alpha})_{\alpha}$ modulo $I$, then $\mathcal A$
has a bounded approximate identity $(f_{\mu})_{\mu}$ modulo $I$ such that
$f_{\mu}\in D$ for each $\mu$.
\end{lemma}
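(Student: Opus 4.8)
The plan is to prove Lemma \ref{Lem1} by a direct approximation argument, replacing each $e_\alpha$ in the given bounded approximate identity modulo $I$ by a nearby element of the dense subspace $D$, and then re-indexing so that the approximation error is absorbed into the convergence. First I would record the boundedness hypothesis: there is a constant $M>0$ with $\|e_\alpha\|\le M$ for all $\alpha$. The new net will be built on the directed set $S=\{(\alpha,\varepsilon):\alpha\in A,\ \varepsilon>0\}$ ordered by $(\alpha_1,\varepsilon_1)\le(\alpha_2,\varepsilon_2)$ iff $\alpha_1\le\alpha_2$ and $\varepsilon_2\le\varepsilon_1$; for each index $\mu=(\alpha,\varepsilon)$ I use density of $D$ to choose $f_\mu\in D$ with $\|f_\mu-e_\alpha\|<\varepsilon$.

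The key estimates are then routine. Since $\|f_\mu\|\le\|e_\alpha\|+\varepsilon\le M+\varepsilon$, I would restrict to $\varepsilon\le 1$ (which is cofinal in $S$), giving the uniform bound $\|f_\mu\|\le M+1$, so $(f_\mu)_\mu$ is bounded. For the approximate-identity property on $\mathcal A\setminus I$, fix $a\in\mathcal A\setminus I$ and write
\[
\|af_\mu-a\|\le\|a(f_\mu-e_\alpha)\|+\|ae_\alpha-a\|\le\|a\|\,\varepsilon+\|ae_\alpha-a\|,
\]
and symmetrically $\|f_\mu a-a\|\le\|a\|\,\varepsilon+\|e_\alpha a-a\|$. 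Given $\eta>0$, choose $\alpha_0$ so that $\|ae_\alpha-a\|<\eta/2$ and $\|e_\alpha a-a\|<\eta/2$ for $\alpha\ge\alpha_0$, and let $\varepsilon_0=\min\{1,\eta/(2(\|a\|+1))\}$; then for every $\mu=(\alpha,\varepsilon)\ge(\alpha_0,\varepsilon_0)$ both expressions are below $\eta$. This shows $\lim_\mu af_\mu=\lim_\mu f_\mu a=a$ for all $a\in\mathcal A\setminus I$, which is exactly the bounded-approximate-identity-modulo-$I$ condition from Definition \ref{D2} in the normed case.

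The only point requiring the slightest care is verifying that $S$ is genuinely directed and that the relevant index sets are cofinal, so that the $\varepsilon$-truncation does not destroy the net structure; this is the step I would expect to be the main (and still quite mild) obstacle, since everything else is the standard ``perturb a bounded net by dense elements'' argument. Note that the lemma is stated for a \emph{normed} algebra, so I may use the single norm $\|\cdot\|$ throughout rather than a family of seminorms, which keeps all the estimates elementary and explains the paper's remark that the result ``is not hard to see.''
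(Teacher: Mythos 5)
Your proposal is correct. Note that the paper offers no proof of this lemma at all --- it is prefaced only by ``It is not hard to see the following lemma holds'' --- so there is nothing to compare against; your argument supplies exactly the standard justification the authors left implicit, and it is the natural one: perturb each $e_\alpha$ by density, index over pairs $(\alpha,\varepsilon)$, and absorb the error. Your construction in fact mirrors the directed-set technique the paper itself uses in the proof of Proposition \ref{P3} (indexing over tuples that include an $\varepsilon$), so it fits the paper's style as well as its needs; the one point worth tidying is that rather than ``restricting to the cofinal subset $\varepsilon\le 1$'' after the fact, you may simply take the index set to be $\{(\alpha,\varepsilon):\alpha\in A,\ 0<\varepsilon\le 1\}$ from the start, which avoids any discussion of subnets (and a cofinal subset of a directed set is directed anyway, so either way the argument stands).
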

In the sequel we shall use the notation ${I_n}={\varphi_n(I)}$, where 
$\varphi_n:\mathcal A\to\mathcal A_n$ is the canonical map.

\begin{proposition}\label{P4}
Let $(\mathcal A,p_n)$ be a  Fr$\acute{e}$chet algebra and $I$ be a closed ideal of
 $\mathcal A$. Then the following statements are equivalent;
\begin{enumerate}
\item[(i)]   $\mathcal A$  has a locally bounded approximate identity modulo $I$.
 \item[(ii)]
For each Banach algebra $\mathcal B$ such that there exists a continuous homomorphism 
$\varphi:\mathcal A\to\mathcal B$ with dense range, $\mathcal B$ has a  bounded 
approximate identity modulo $J$, where $J$ is a closed ideal of $\mathcal B$
 such that $\varphi(I)\subseteq J$.
\end{enumerate}
\end{proposition}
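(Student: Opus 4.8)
The plan is to prove the two implications separately, treating $(i)\Rightarrow(ii)$ as essentially a corollary of the machinery already in place and reserving the real work for $(ii)\Rightarrow(i)$. For $(i)\Rightarrow(ii)$, let $\mathcal B$ be a Banach algebra, $\varphi\colon\mathcal A\to\mathcal B$ a continuous homomorphism with dense range, and $J$ a closed ideal of $\mathcal B$ with $\varphi(I)\subseteq J$. Since a Banach algebra is in particular a Fr\'echet algebra, Proposition \ref{P2} applies verbatim and shows that $\mathcal B$ carries a locally bounded approximate identity modulo $J$; as $\mathcal B$ is normable, Remark \ref{rem1} upgrades this to a genuine bounded approximate identity modulo $J$. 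This disposes of the first implication.

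For $(ii)\Rightarrow(i)$ I would feed the hypothesis the data of an Arens--Michael decomposition $\mathcal A=\underleftarrow{\lim}\mathcal A_n$, with $I=\underleftarrow{\lim}\overline{I_n}$ and $I_n=\varphi_n(I)$. Each canonical map $\varphi_n\colon\mathcal A\to\mathcal A_n$ is a continuous homomorphism with dense range satisfying $\varphi_n(I)=I_n\subseteq\overline{I_n}$, so taking $\mathcal B=\mathcal A_n$ and $J=\overline{I_n}$ in (ii) produces, for every $n$, a bounded approximate identity modulo $\overline{I_n}$ in $\mathcal A_n$ with some bound $C_n$. Invoking Lemma \ref{Lem1} against the dense subalgebra $\varphi_n(\mathcal A)\subseteq\mathcal A_n$, I may assume these consist of elements $\varphi_n(b)$ with $b\in\mathcal A$ and $p_n(b)\le C_n$. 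The family $\{C_n\}_{n}$ is then the candidate constant family for a locally bounded approximate identity modulo $I$ of $\mathcal A$.

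The assembly step runs as follows. Fix a finite set $F\subseteq\mathcal A\setminus I$, an index $n$, and $\varepsilon>0$; I must produce $b\in\mathcal A$ with $p_n(b)\le C_n$ and $p_n(ab-a)<\varepsilon$, $p_n(ba-a)<\varepsilon$ for all $a\in F$. The natural move is to push $F$ forward to $\varphi_n(F)\subseteq\mathcal A_n$ and apply the bounded approximate identity modulo $\overline{I_n}$; since $F$ is finite, a single net element $e=\varphi_n(b)$ can be chosen to approximate all of $\varphi_n(F)$ at once, after which $p_n(ab-a)=\|\varphi_n(a)e-\varphi_n(a)\|_n$ is controlled. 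To know that $\varphi_n(F)$ really lies in $\mathcal A_n\setminus\overline{I_n}$, where the approximate identity is effective, I would reuse the threshold argument from the proof of Theorem \ref{T1}: for each $a\notin I$ there is an $n_0(a)$ with $a_m=\varphi_m(a)\notin\overline{I_m}$ for all $m\ge n_0(a)$, because the linking maps carry $\overline{I_m}$ into $\overline{I_{n_0}}$. Setting $N=\max_{a\in F}n_0(a)$ and working at any index $m\ge\max(n,N)$ guarantees $\varphi_m(F)\subseteq\mathcal A_m\setminus\overline{I_m}$, and since the seminorms are increasing, $p_n\le p_m$ transports $p_m(ab-a)<\varepsilon$ down to $p_n(ab-a)<\varepsilon$.

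The hard part, and the step I expect to demand the most care, is the bookkeeping of constants. Passing to the larger index $m$ to keep $\varphi_m(F)$ outside $\overline{I_m}$ only bounds $p_n(b)$ by $p_m(b)\le C_m$, whereas the definition demands $p_n(b)\le C_n$; moreover $m$ depends on $F$ while the family $\{C_n\}$ must be fixed in advance. If the bounded approximate identities modulo $\overline{I_n}$ can be taken with a common bound (for instance contractive, $C_n\equiv 1$), this difficulty evaporates and the index shift is harmless. In general one must argue more carefully — for example, transporting a single approximate identity from a high level downward through the contractive linking maps by Corollary \ref{Cor2} and choosing the $C_n$ to dominate the resulting bounds — so that a fixed constant family survives the shift. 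Verifying that the threshold/index-shift device handles precisely those $a\in F$ whose images $\varphi_n(a)$ fall inside $\overline{I_n}$ (and to which the approximate identity modulo $\overline{I_n}$ is otherwise blind), all while respecting these fixed constants, is where the argument is most delicate.
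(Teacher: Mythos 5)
Your proposal follows the paper's proof essentially step for step: (i)$\Rightarrow$(ii) is Proposition \ref{P2} combined with Remark \ref{rem1}, and (ii)$\Rightarrow$(i) runs through the Arens--Michael decomposition, Lemma \ref{Lem1}, Proposition \ref{P3}, and the threshold device borrowed from Theorem \ref{T1}. The step you flag as ``the hard part'' is a genuine gap, and---this is worth knowing---the paper does not close it either. In the paper's argument, for each level $n$ one obtains a set $B_n\subseteq\mathcal A$ of lifts of an approximate identity of $\varphi_n(\mathcal A)$ modulo $\overline{I_n}$; the paper asserts that $B_n$ is bounded in $\mathcal A$, but Proposition \ref{P3} only bounds the images in the quotient norm $\|\varphi_n(b)\|_n=p_n(b)$, and nothing whatsoever controls $p_m(b)$ for $m>n$. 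Worse, the element $b$ finally produced lies in $B_n$ with $n\geq\max\{k,n_0(F)\}$ depending on the finite set $F$, whereas Definition \ref{D2} requires the family $\{C_k\}$ to be fixed before $F$ is given. So the circularity you describe is present verbatim in the published proof; your proposal is that proof with the defect made explicit rather than hidden.

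The gap can be closed, and more cheaply than the repairs you sketch (common bounds, or transport through the linking maps via Corollary \ref{Cor2}). First observe that, with the definitions used in this paper, a bounded approximate identity modulo a \emph{proper} closed ideal $J$ of a Banach algebra is automatically a bounded approximate identity for the whole algebra: if $a\in J$ and $z\notin J$, then $a+z\notin J$, so $u_\alpha(a+z)\to a+z$ and $u_\alpha z\to z$ force $u_\alpha a\to a$, and similarly on the right. Next, assume $I\neq\mathcal A$ (otherwise (i) is vacuous); since $I$ is closed, there is a fixed $n_1$ with $\overline{I_{n_1}}\neq\mathcal A_{n_1}$ (if $\overline{I_n}=\mathcal A_n$ for every $n$, then $I$ would be dense, hence equal to $\mathcal A$), and then $\overline{I_m}\neq\mathcal A_m$ for all $m\geq n_1$, because the linking maps have dense range and carry $I_m$ onto $I_{n_1}$. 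For each $k$ put $m(k)=\max\{k,n_1\}$: by hypothesis (ii) and the observation above, $\mathcal A_{m(k)}$ has a genuine bounded approximate identity, which Lemma \ref{Lem1} allows us to take inside $\varphi_{m(k)}(\mathcal A)$ with some bound $M_{m(k)}$. Given $F\subseteq\mathcal A\setminus I$, $k$ and $\varepsilon>0$, no condition $\varphi_{m(k)}(a)\notin\overline{I_{m(k)}}$ is needed any longer, so the $F$-dependent threshold disappears: choose $b\in\mathcal A$ with $p_{m(k)}(b)=\|\varphi_{m(k)}(b)\|_{m(k)}\leq M_{m(k)}$ and $p_k(ab-a)\leq p_{m(k)}(ab-a)<\varepsilon$, $p_k(ba-a)<\varepsilon$ for all $a\in F$. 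Now $C_k:=M_{m(k)}$ is a legitimate family fixed in advance, which is exactly what Definition \ref{D2} demands.
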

\begin{proof}
$(i)\Rightarrow (ii)$. This follows by using Proposition \ref{P2} and 
viewing Remark \ref{rem1}.\\
$(ii)\Rightarrow (i)$. Let $n\in\mathbb{N}$ be arbitrary and 
let $\mathcal A=\underleftarrow{\lim}\mathcal A_n$ be an
 Arens-Michael decomposition of $\mathcal A$
and $I=\underleftarrow{\lim}\overline{I_n}$ be 
an Arens-Michael decomposition of $I$. Since $\varphi_n:\mathcal A\to\mathcal A_n$
is a continuous homomorphism with dense range, by assumption the Banach algebra $\mathcal A_n$
has a bounded approximate identity modulo $\overline{I_n}$. Hence by Lemma 
\ref{Lem1}, the same is true for the dense subalgebra $\varphi_n(\mathcal A)\subseteq \mathcal A_n$.
Therefore  Proposition \ref{P3}, yields  the exitence of  a bounded set $B\subseteq \mathcal A$
such that for each finite set $F^{'}\subseteq \varphi_n(\mathcal A)\setminus \overline{I_n}$
and each $\varepsilon>0$ there exists $b\in B$ such that 
$$\|b^{'}\varphi_n(b)- b^{'}\|_n<\varepsilon \;\;\;and\;\;\; \|\varphi_n(b)b^{'}- b^{'}\|_n<\varepsilon,$$
for each $b^{'}\in F^{'}$.
 Note  that if $F=\{a_1, a_2, ...,a_m\}\subseteq \mathcal A\setminus I$ is a
finite set, then there exists $n_0\in\mathbb{N}$ such that for each $n\geq n_0$, 
$a_n^i\notin \overline{I_n}$, where $a_i=(a_n^i)$ and $1\leq i\leq m$. 
In fact if $a_i\notin I$, then there exists $n_i\in\mathbb{N}$,
such that $a_{n_i}^i\notin \overline{I_{n_i}}$. So for each $n\geq n_i$, $a_n^i\notin \overline{I_n}$.
Now  put $n_0=\max\{n_i:\;\;1\leq i\leq m\}$.
Therefore for each $n\geq n_0$,  $a_{n}^i\notin \overline{I_{n}}$.
In the sequel take  a finite set $F\subseteq \mathcal A\setminus I$, $k\in\mathbb{N}$ and
 $\varepsilon>0$. Then for each $n\geq \max\{k, n_0\}$, we can find a finite 
set $F^{'}=\varphi_n(F)\subseteq \varphi_n(\mathcal A)\setminus \overline{I_n}$,
such that
\begin{eqnarray*}
p_k(ab-a)&\leq & p_n(ab-a)\\
&=&
\|\varphi_n(ab-a)\|_n\\
&=& \|\varphi_n(a)\varphi_n(b)-\varphi_n(a)\|_n\\
&<&\varepsilon
\end{eqnarray*}
and 
\begin{eqnarray*}
p_k(ba-a)&\leq & p_n(ba-a)\\
&=&
\|\varphi_n(ba-a)\|_n\\
&=& \|\varphi_n(b)\varphi_n(a)-\varphi_n(a)\|_n\\
&<&\varepsilon,
\end{eqnarray*}
for each $a\in F$.  Since $b\in B$ and $B$ is a bounded set, for 
each $k\in\mathbb{N}$ there exists $C_k>0$ such that 
$p_k(b)\leq C_k$. 
By Definition \ref{D2}, this completes the proof.
\end{proof}

\begin{corollary}\label{Cor4}
Let $(\mathcal A,p_n)$ be a  Fr$\acute{e}$chet algebra and $I$ be a closed ideal of
 $\mathcal A$. Suppose that $\mathcal A=\underleftarrow{\lim}\mathcal A_n$
be an Arens-Michael decomposition of $\mathcal A$ and $I=\underleftarrow{\lim}\overline{I_n}$ is an 
Arens-Michael decomposition of $I$. Then $\mathcal A$ 
has a locally bounded approximate identity modulo $I$ if and only if
each  $\mathcal A_n$ has a bounded approximate identity modulo $\overline{I_n}$. 
\end{corollary}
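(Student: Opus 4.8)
The plan is to read off both implications directly from Proposition \ref{P4}, since the canonical maps $\varphi_n:\mathcal A\to\mathcal A_n$ are precisely the continuous dense-range homomorphisms into Banach algebras furnished by the Arens-Michael decomposition. No new analytic input should be needed beyond that proposition and the lemmas preceding it.

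For the forward implication, I would assume $\mathcal A$ has a locally bounded approximate identity modulo $I$ and invoke the implication $(i)\Rightarrow(ii)$ of Proposition \ref{P4}. Fixing $n\in\mathbb{N}$, the canonical map $\varphi_n:\mathcal A\to\mathcal A_n$ is a continuous homomorphism with dense range, since $\varphi_n(\mathcal A)$ is dense in $\mathcal A_n$, and it satisfies $\varphi_n(I)=I_n\subseteq\overline{I_n}$, so $\overline{I_n}$ is an admissible target ideal. Applying $(i)\Rightarrow(ii)$ with $\mathcal B=\mathcal A_n$ and $J=\overline{I_n}$ then gives at once that $\mathcal A_n$ has a bounded approximate identity modulo $\overline{I_n}$, and since $n$ was arbitrary this holds for every $n$.

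For the converse, I would assume each $\mathcal A_n$ has a bounded approximate identity modulo $\overline{I_n}$ and appeal to the implication $(ii)\Rightarrow(i)$ of Proposition \ref{P4}. The point to handle with care is that the corollary's hypothesis is not literally the full statement $(ii)$, which quantifies over all Banach algebras receiving a dense-range homomorphism, but only its restriction to the canonical maps $\varphi_n$. However, inspection of the proof of $(ii)\Rightarrow(i)$ shows that it consumes condition $(ii)$ solely through the Banach algebras $\mathcal A_n$ together with the maps $\varphi_n$: it takes a bounded approximate identity modulo $\overline{I_n}$ on $\varphi_n(\mathcal A)$ via Lemma \ref{Lem1}, and transfers it back along $\varphi_n$ using Proposition \ref{P3}, treating points of $\mathcal A\setminus I$ by the index-selection argument that for $F=\{a_1,\dots,a_m\}\subseteq\mathcal A\setminus I$ there is an $n_0$ with $a_n^i\notin\overline{I_n}$ for all $n\ge n_0$. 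Thus the stated hypothesis supplies exactly what that argument requires, and the same reasoning yields a locally bounded approximate identity modulo $I$ for $\mathcal A$.

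I expect the only genuine subtlety to be this last bookkeeping observation, namely that the hypothesis on the $\mathcal A_n$'s coincides with the special case of $(ii)$ actually used in the proof of Proposition \ref{P4}, rather than any fresh estimate; the remainder is a direct citation of that proposition.
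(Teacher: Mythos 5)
Your forward implication coincides with the paper's: both apply Proposition \ref{P4}, $(i)\Rightarrow(ii)$, to the canonical maps $\varphi_n:\mathcal A\to\mathcal A_n$ with $J=\overline{I_n}$. For the converse, however, you take a genuinely different route. The paper never reopens the proof of Proposition \ref{P4}; it verifies the \emph{full} hypothesis $(ii)$ of that proposition: given an arbitrary Banach algebra $\mathcal B$, a continuous dense-range homomorphism $\Phi:\mathcal A\to\mathcal B$, and a closed ideal $J$ with $\Phi(I)\subseteq J$, it factors $\Phi$ through some $\mathcal A_n$ (arguing as in Theorem \ref{T1}: continuity gives $\|\Phi(a)\|\le C p_n(a)$ for some $n$, so $\Phi$ drops to $\mathcal A/\ker p_n$ and extends to a dense-range homomorphism $\Phi_n:\mathcal A_n\to\mathcal B$ with $\Phi_n(I_n)=\Phi(I)\subseteq J$), then transfers the bounded approximate identity modulo $\overline{I_n}$ from $\mathcal A_n$ to $\mathcal B$ via Corollary \ref{Cor2}, and only then invokes $(ii)\Rightarrow(i)$ of Proposition \ref{P4} as a black box. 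You instead bypass the verification of $(ii)$ by observing that the proof of $(ii)\Rightarrow(i)$ consumes the hypothesis only through the instances $(\mathcal B,J)=(\mathcal A_n,\overline{I_n})$ along the canonical maps, which is exactly your hypothesis; your observation is accurate, since the only point where that proof uses assumption $(ii)$ is the sentence applying it to $\varphi_n$ with $J=\overline{I_n}$, after which it proceeds via Lemma \ref{Lem1}, Proposition \ref{P3}, and the index-selection argument, none of which need $(ii)$ again. So your argument is correct; in effect you note that the proof of Proposition \ref{P4} establishes a formally stronger statement (the canonical-map instances of $(ii)$ already imply $(i)$). The paper's route buys modularity, using Proposition \ref{P4} only through its statement at the price of the extra factorization step and Corollary \ref{Cor2}; your route buys brevity at the price of appealing to the internal structure of another proof rather than to a stated result. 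If you want your version to stand on statements alone, record the strengthened form of $(ii)\Rightarrow(i)$ as a separate lemma and cite that.
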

\begin{proof}
Since $\varphi_n:\mathcal A\to \mathcal A_n$ is a continuous homomorphism
with dense range, Proposition \ref{P4} implies that if $\mathcal A$ has a locally
 bounded approximate identity modulo $I$, then $\mathcal A_n$ has a
 bounded approximate identity modulo $\overline{I_n}$.
\\Conversely, suppose that each $\mathcal A_n$ has a bounded approximate identity
modulo $\overline{I_n}$ and $\mathcal B$ be a Banach algebra such that there exists 
a continuous homomorphism $\Phi:\mathcal A\to \mathcal B$ with dense range. Arguing 
as in the   proof of Theorem \ref{T1}, we deduce that there exists a 
continuous homomorphism with dense range $\Phi_n:\mathcal A_n\to \mathcal B$,
for some $n\in\mathbb{N}$. On the other hand if $J$ is a closed ideal of $\mathcal B$
such that $\Phi(I)\subseteq J$, we have $\Phi_n(I_n)=\Phi(I)\subseteq J$.
Therefore by Corollary \ref{Cor2}, $\mathcal B$ has a bounded approximate identity
modulo $J$ such that $\Phi(I)\subseteq J$. Thus by using Proposition \ref{P4},
$\mathcal A$ has a locally bounded approximate identity modulo $I$.
\end{proof}
 Now we are in position to prove \cite[Theorem 4]{Rahimi1} for  Fr$\acute{e}$chet algebras.
\begin{corollary}
Let $\mathcal A$ be a  Fr$\acute{e}$chet algebra and $I$ be a closed ideal of 
$\mathcal A$. If $\mathcal A$ is amenable modulo $I$, then $\mathcal A$ 
has a locally bounded approximate identity modulo $I$.
\end{corollary}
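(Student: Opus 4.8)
The plan is to reduce the statement entirely to the Banach algebra case by passing to an Arens-Michael decomposition and then chaining together the two reduction results already established in this paper, namely Theorem \ref{T1} and Corollary \ref{Cor4}, together with the known Banach algebra implication \cite[Theorem 4]{Rahimi1}.

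First I would fix an Arens-Michael decomposition $\mathcal A=\underleftarrow{\lim}\mathcal A_n$ together with the associated decomposition $I=\underleftarrow{\lim}\overline{I_n}$ of the closed ideal $I$, which is available since, as recorded in Section $1$, every closed ideal of a Fr\'echet algebra inherits such a decomposition from that of $\mathcal A$ via the canonical maps $\varphi_n$. Keeping both decompositions tied to the same family $\overline{I_n}$ is what makes the two reduction theorems applicable simultaneously.

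Next, since $\mathcal A$ is amenable modulo $I$ by hypothesis, Theorem \ref{T1} guarantees that every Banach algebra $\mathcal A_n$ occurring in this decomposition is amenable modulo $\overline{I_n}$. Each $\mathcal A_n$ is now an honest Banach algebra that is amenable modulo the closed ideal $\overline{I_n}$, so the Banach algebra result \cite[Theorem 4]{Rahimi1} applies directly and produces a bounded approximate identity modulo $\overline{I_n}$ for each $n\in\mathbb{N}$. Finally, Corollary \ref{Cor4} converts the fiberwise existence of these bounded approximate identities modulo $\overline{I_n}$ into the existence of a single locally bounded approximate identity modulo $I$ for $\mathcal A$, which is exactly the desired conclusion.

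Since the argument is a direct concatenation of three already-proved facts, I do not expect any genuine technical obstacle. The only point meriting care is ensuring that the decomposition of $I$ is the one compatible with the chosen decomposition of $\mathcal A$, so that the ideals $\overline{I_n}$ against which Theorem \ref{T1} provides amenability are precisely those against which Corollary \ref{Cor4} demands bounded approximate identities; this compatibility is supplied automatically by the Arens-Michael decomposition of $I$.
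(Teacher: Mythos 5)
Your proposal is correct and follows exactly the paper's own argument: fix compatible Arens-Michael decompositions of $\mathcal A$ and $I$, apply Theorem \ref{T1} to get each $\mathcal A_n$ amenable modulo $\overline{I_n}$, invoke \cite[Theorem 4]{Rahimi1} to obtain bounded approximate identities modulo $\overline{I_n}$, and conclude with Corollary \ref{Cor4}. There is nothing to add; the chain of reductions is precisely the one the authors use.
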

\begin{proof}
Suppose that $\mathcal A=\underleftarrow{\lim}\mathcal A_n$
be an Arens-Michael decomposition of $\mathcal A$ and $I=\underleftarrow{\lim}\overline{I_n}$ be an 
Arens-Michael decomposition of $I$. Since $\mathcal A$ is amenable modulo $I$, 
each $\mathcal A_n$ is amenamble modulo $\overline{I_n}$, by Theorem \ref{T1} .
So   $\mathcal A_n$ has a bounded approximate
identity  modulo $\overline{I_n}$ for each $n\in\mathbb{N}$, by \cite[Theorem 4]{Rahimi1}. In view of Corollary \ref{Cor4}, this completes the proof.
\end{proof}

{\bf Acknowledgment.} This work would not have been possible without the financial support of
the Iran national science foundation, which have been supportive our
goals. We would like to express our special gratitude and thanks to it for giving us such attention and time.
The authors's  thanks and appreciations also go to the university of Isfahan and the center
 of excellence for mathematics at the university of Isfahan
 in developing the project.

\footnotesize

\vspace{9mm}

{\footnotesize \noindent
 S. Rahnama\\
  Department of Mathematics,
   University of Isfahan,
    Isfahan, Iran\\
     rahnama$_{-}600$@yahoo.com\\

\noindent
 A. Rejali\\
  Department of Mathematics,
   University of Isfahan,
    Isfahan, Iran\\
    rejali@sci.ui.ac.ir\\

\end{document}